\definecolor{Red}{cmyk}{0,1,1,0}
\definecolor{verde}{cmyk}{1,0,1,0}
\definecolor{loka}{cmyk}{.5,0,1,.5}
\definecolor{azul}{cmyk}{1,1,0,0}
\numberwithin{equation}{section}
\newcommand{\be}{\begin{equation}}
\newcommand{\ee}{\end{equation}}
\newtheorem{theorem}{Theorem}
\newtheorem{definition}{Definition}
\newtheorem{corollary}[equation]{Corollary}
\begin{document}
\title{On the local $M$-derivative}
\author{J. Vanterler da C. Sousa$^1$}
\address{$^1$ Department of Applied Mathematics, Institute of Mathematics,
 Statistics and Scientific Computation, University of Campinas --
UNICAMP, rua S\'ergio Buarque de Holanda 651,
13083--859, Campinas SP, Brazil\newline
e-mail: {\itshape \texttt{ra160908@ime.unicamp.br, capelas@ime.unicamp.br }}}

\author{E. Capelas de Oliveira$^1$}

\begin{abstract} We introduce a new local derivative that generalizes the so-called 	alternative $"fractional"$ derivative recently proposed.  We denote this new differential operator by $\mathscr{D}_{M}^{\alpha,\beta}$, where the parameter $\alpha$, associated with the order, is such that $0<\alpha<1$, $\beta>0$ and $M$ is used to denote that the function to be derived involves a Mittag-Leffler function with one	parameter. 

This new derivative satisfies some properties of integer-order calculus, e.g. linearity, product rule, quotient rule, function composition and the chain rule. Besides as in the case of the Caputo derivative, the	derivative of a constant is zero. Because Mittag-Leffler function is a natural generalization of the exponential function, we can extend some of the classical results, namely: Rolle's theorem, the mean value theorem and its extension.

We present the corresponding $M$-integral from which, as a natural consequence, new results emerge which can be interpreted as applications. Specifically, we generalize the inversion property of the fundamental theorem of calculus and prove a theorem associated with	the classical integration by parts. Finally, we present an application involving linear differential equations by means of local $M$-derivative with some graphs.
\vskip.5cm
\noindent
\emph{Keywords}: Local $M$-Derivative, Local $M$-Differential Equation, $M$-Integral, Mittag-Leffler Function.
\newline 
MSC 2010 subject classifications. 26A06; 26A24; 26A33; 26A42.
\end{abstract}
\maketitle


\section{Introduction} 

The integral and differential calculus of integer-order developed by Leibniz and Newton was a great discovery in mathematics, having numerous applications in several areas of physics, biology, engineering and others.  But something intriguing and interesting to the mathematicians of the day was still to come. In 1695 \cite{GWL1,GWL2,GWL3}, $\ell$'Hospital, in a letter to Leibniz, asks
him about the possibility of extending the meaning of an integer-order derivative $d^{n}y/dx^{n}$ to the case in which the order is a fraction. This question initiated the history of a new calculus which was called non-integer order calculus and which nowadays is usually called fractional calculus.

Although fractional calculus emerged at the same time as the integer-order calculus proposed by Newton and Leibniz, it did not attract the attention of the scientific community and for many years remained hidden. It was only after an international congress in 1974 that fractional calculus began to be known and consolidated in numerous applications in several fields such as mathematics, physics, biology and engineering.

Several types of fractional derivatives have been introduced to date, among which the Riemann-Liouville, Caputo, Hadamard, Caputo-Hadamard, Riesz and other types \cite{ECJT}. Most of these derivatives are defined on the basis of the corresponding fractional integral in the Riemann-Liouville sense.

Recently, Khalil et al. \cite{RMAM} proposed the so-called conformable fractional derivative of order $\alpha$, $0<\alpha<1$, in order to generalize classical properties of integer-order calculus. Some applications of the conformable fractional derivative  and the alternative fractional derivative are gaining space in the field of the fractional calculus and numerous works using such derivatives are being published of which we mention: the heat equation, the Taylor formula and some inequalities of convex functions \cite{DRA,YC}. More recently, in 2014, Katugampola \cite{UNK} also proposed a new fractional derivative with classical properties, similar to the conformable fractional derivative.

Given such a variety of definitions we are naturally led to ask which are the criteria that must be satisfied by an operator, differential or integral, in order to be called a fractional operator. In 2014, Ortigueira and Machado \cite{JAM,MDJA} discussed the concepts underlying those definitions and pointed out some properties that, according to them, should be satisfied by such operators (derivatives and integrals) in order to be called fractional. However, there exist operators which one would like to call fractional \cite{RMAM,UNK} even though they do not satisfy the criteria proposed by Ortigueira and Machado, and this led Katugampola \cite{UNK1} to criticize those criteria. 

The main motivation for this work comes from the alternative fractional derivative recently introduced \cite{UNK} and some new results involving the conformable fractional derivative \cite{RMAM,OSEN,TA}, all of which constitute particular cases of our results. In this sense, as an application of local $M$-derivative, we present the general solution of a linear differential equation with graphs.

This paper is organized as follows: in section 2 we present the concepts of fractional derivatives in the Riemann-Liouville and Caputo sense, the definition of fractional derivative by Khalil et al. \cite{RMAM} and the alternative definition proposed by Katugampola \cite{UNK}, together with their
properties. In section 3, our main result, we introduce the concept of an local $M$-derivative involving a Mittag-Leffler function and demonstrate several theorems. In section 4 we introduce the corresponding $M$-integral, for which we also present several results; in particular, a generalization of the fundamental theorem of calculus. In section 5, we present the relation between the local $M$-derivatives, introduced here, and the alternative proposed in \cite{UNK}. In section 6, we present an application involving linear differential equations by means of local $M$-derivative with some graphs. Concluding remarks close the paper.


\section{Preliminaries}

The most explored and studied fractional derivatives of fractional calculus are
the so-called Riemann-Liouville and Caputo derivatives. Both types are
fundamental in the study of fractional differential equations; their definitions
are presented below.

\begin{definition} Let $\alpha\in\mathbb{C}$ such that $Re\left( \alpha \right)
	>0$ and $m-1<\alpha \leq m$. The fractional derivative of order $\alpha$
	of a causal function $f$ in the Riemann-Liouville sense,
	$\mathcal{D}_{RL}^{\alpha }f\left( t\right)$, is defined by
	{\rm\cite{IP,RM,RCEC}}
\begin{equation}\label{L7}
\mathcal{D}_{RL}^{\alpha }f\left( t\right) := D^{m}\textbf{J}^{m-\alpha }f\left( t\right) ,
\end{equation}
or
\begin{equation}\label{L8}
\mathcal{D}_{RL}^{\alpha }f\left( t\right) :=\left\{ 
\begin{array}{c}
\displaystyle\frac{1}{\Gamma \left( m-\alpha \right) }\frac{d^{m}}{dt^{m}}\left[
\int_{0}^{t}f\left( \tau \right) \left( t-\tau \right) ^{m-\alpha -1}d\tau %
\right] ,\text{ \ }m-1<\alpha <m ,\\ 
\\ 
\displaystyle\frac{d^{m}}{dt^{m}}f\left( t\right) \text{ },\text{\ \ \ \ \ \ \ \ \ \ \ \
\ \ \ \ \ \ \ \ \ \ \ \ \ \ \ \ \ \ \ \ \ \ \ \ \ \ \ \ \ \ }\alpha =m%
\end{array}%
\right.   
\end{equation}
where $D^{m} = d^m/dt^m$ is the usual derivative of integer-order $m$ and
$\textbf{J}^{m-\alpha }$ is the fractional integral in the Riemann-Liouville sense.
If  $\alpha=0$, we define $\mathcal{D}^{0}_{RL}=I$, where $I$ is the identity operator.
\end{definition}

\begin{definition} Let $\alpha\in\mathbb{C}$ such that $\mbox{Re}\left( \alpha
	\right) >0$ and $m$ the smallest integer greater than or equal to
	$\mbox{Re}\left( \alpha \right) >0$, with $m-1<\alpha \leq m$. The
	fractional derivative of order $\alpha$ of a causal function $f$ in the
	Caputo sense, ${D}_{C}^{\alpha }f\left( t\right)$, is defined by
	\rm\cite{IP,RM,RCEC}
\begin{equation}\label{L9}
\mathcal{D}_{C}^{\alpha }f\left( t\right) :=\textbf{J}^{m-\alpha }D^{m}f\left(
t\right) ,\text{ }m\in \mathbb{N}
\end{equation}
or 
\begin{equation}\label{L10}
\mathcal{D}_{C}^{\alpha }f\left( t\right) :=\left\{ 
\begin{array}{c}
\displaystyle\frac{1}{\Gamma \left( m-\alpha \right) }\int_{0}^{t}f^{\left( m\right)
}\left( \tau \right) \left( t-\tau \right) ^{m-\alpha -1}d\tau ,\text{ \ }%
m-1<\alpha <m ,\\ 
\\ 
\displaystyle\frac{d^{m}}{dt^{m}}f\left( t\right) \text{ },\text{ \ \ \ \ \ \ \ \ \ \ \ \ \ \ \ \ \ \ \ \ \ \ \ \ \ \ \ \ \ \ \ \ \ \ \ \ \ \ \ \ \ \ }\alpha =m%
\end{array}%
\right. 
\end{equation}
where $D^{m}$ is the usual derivative of integer-order $m$,
	$\textbf{J}^{m-\alpha }$ is the fractional integral in the
	Riemann-Liouville sense and $f^{(m)}(\tau) = \frac{d^m
	f(\tau)}{d\tau^m}$.
\end{definition}

We present now the definitions of two new types of $"fractional"$ derivatives.  As
we shall show later, these definitions coincide, for a particular value of
their parameters, with the derivative of order one of integer-order calculus.

\begin{definition} Let $f:\left[ 0,\infty \right) \rightarrow \mathbb{R}$ and $t>0$.
	Then the conformable fractional derivative of order $\alpha$ of 
	$f$ is defined by \rm\cite{RMAM}
\begin{equation}
T_{\alpha }f \left( t\right) =\underset{\varepsilon
\rightarrow 0}{\lim }\frac{f\left( t+\varepsilon t^{1-\alpha }\right)
-f\left( t\right) }{\varepsilon },
\end{equation}
$\forall t>0$ and $\alpha\in(0,1)$.
\end{definition}

A function $f$ is called $\alpha$-differentiable if it has a fractional derivative.
If $f$ is $\alpha$-differentiable in some interval $(0,a)$, $a>0$
and if $\underset{t\rightarrow 0^{+}}{\lim }f^{\left( \alpha \right)
}\left(t\right) $ exists, then we define

\begin{equation*}
T_{\alpha}\left( 0\right) =\underset{t\rightarrow 0^{+}}{%
\lim }T_{\alpha }f \left( t\right).
\end{equation*}

\begin{definition} Let $f:\left[ 0,\infty \right) \rightarrow \mathbb{R}$ and
	$t>0$.  Then the alternative fractional derivative of order $\alpha$ of
	$f$ is defined by \rm\cite{UNK}
\begin{equation}
\mathcal{D}_{\alpha }f\left( t\right) =\underset{\varepsilon
\rightarrow 0}{\lim }\frac{f\left( te^{\varepsilon t^{-\alpha}} \right)
-f\left( t\right) }{\varepsilon },
\end{equation}
$\forall t>0$ and $\alpha\in(0,1)$.
\end{definition}
Also, if $f$ is $\alpha$-differentiable in some interval $(0,a)$, $a>0$ and
$\underset{t\rightarrow 0^{+}}{\lim }f^{\left( \alpha \right) }\left(t\right) $
exists, then we define
\begin{equation*}
\mathcal{D}_{\alpha}\left( 0\right) =\underset{t\rightarrow 0^{+}}{%
\lim }\mathcal{D}_{\alpha }f\left( t\right) .
\end{equation*}

In this work, if both the conformable and the alternative fractional derivatives of order $\alpha$ of a function $f$ exist, we will simply said that the function $f$ is $\alpha$-differentiable.


Ortigueira and Machado \cite{JAM,MDJA} proposed that an operator can be considered a fractional derivative if it satisfies the following properties: (a) linearity; (b) identity; (c) compatibility with previous versions, that is, when the order is integer, the fractional derivative produces the same result as the ordinary integer-order derivative; (d) the law of exponents $\mathcal{D}^{\alpha}\mathcal{D}^{\beta}f(t)=\mathcal{D}^{\alpha+\beta}f(t)$, for all $\alpha<0$ and $\beta<0$; (e) generalized Leibniz rule $\mathcal{D}^{\alpha }\left( f\left( t\right) g\left( t\right) \right)
=\overset{\infty }{\underset{i=0}{\sum }}\binom{\alpha }{i}D^{i}f\left( t\right) D^{\alpha -i}g\left( t\right)$.\footnote{Note that when $\alpha=n\in \mathbb{Z}^{+}$ we obtain the classical Leibniz rule.}

Fractional derivatives in the Riemann-Liouville and in the Caputo sense satisfy such properties, but neither the conformable nor the alternative fractional derivatives satisfy them. 

On the other hand, it is possible to find undesirable characteristics even
in fractional derivatives that satisfy the criteria presented above, e.g.:

\begin{enumerate}

\item Most fractional derivatives do not satisfy $\mathcal{D}^{\alpha}(1)=0$ if
	$\alpha$ is not a natural number. An importante exception is the
		derivative in the Caputo sense.

\item Not all fractional derivatives obey the product rule for two
	functions: 
\begin{equation*}
\mathcal{D}^{\alpha }\left( f\cdot g\right) \left( t\right) =f\left(
t\right) D^{\alpha }g\left( t\right) +g\left( t\right) \mathcal{D}^{\alpha
}f\left( t\right) .
\end{equation*}

\item Not all fractional derivatives obey the quotient rule for two functions:
\begin{equation*}
D^{\alpha }\left( \frac{f}{g}\right) \left( t\right) =\frac{f\left( t\right)
D^{\alpha }g\left( t\right) -g\left( t\right) D^{\alpha }f\left( t\right) }{%
\left[ g\left( t\right) \right] ^{2}}
\end{equation*}

\item Not all fractional derivatives obey the chaim rule:
\begin{equation*}
\mathcal{D}^{\alpha }\left( f\circ g\right) \left( t\right) =g^{\left(
\alpha \right) }\left( t\right) f^{\left( \alpha \right) }\left( g\left(
t\right) \right) .
\end{equation*}

\item Fractional derivatives do not have a corresponding Rolle's theorem.

\item Fractional derivatives do not have a corresponding mean value theorem.

\item Fractional derivatives do not have a corresponding extended mean value theorem.

\item The definition of the Caputo derivative assumes that function $f$ is
	differentiable in the classical sense of the term. 

\end{enumerate}

In this sense, the new conformable and alternative fractional derivatives fit
perfectly into the classical properties of integer-order calculus, in
particular in the case of order one.

The objective of this work is to present a new type of derivative,
the local $M$-derivative, that generalizes the alternative fractional
derivative. The new definition seems to be a natural extension of the usual,
integer-order derivative, and satisfies the eight properties mentioned above.
Also, as in the case of  conformable and alternative fractional derivatives,
our definition coincides with the known fractional derivatives of polynomials.
Finally, we were able to define a corresponding integral for which we can prove
the fundamental theorem of calculus, the inversion theorem and a theorem of
integration by parts.


\section{Local $M$-derivative} 

In this section we present the main definition of this article and obtain
several results that generalize equivalent results valid for the alternative
fractional derivative and which bear a great similarity
to the results found in classical calculus.

On the basis of this definition we could demonstrate that our local $M$-derivative is linear, obeys the product rule, the composition rule for two $\alpha$-differentiable functions, the quotient rule and the chain rule. We show that the derivative of a constant is zero and present $"fractional"$ versions
of Rolle's theorem, the mean value theorem and the extended mean value theorem. Further, the continuity of the $"fractional"$ derivative is demonstrated, as in integer-order calculus. 

Thus, let us begin with the following definition, which is a generalization of
the usual definition of a derivative as a special limit. 

\begin{definition}\label{def5} Let $f:\left[ 0,\infty \right) \rightarrow \mathbb{R}$ and
	$t>0$. For $0<\alpha <1$ we define the local $M$-derivative of order
	$\alpha$ of function $f$, denoted $\mathscr{D}_{M}^{\alpha,\beta}
	f(t)$, by 
\begin{equation}\label{J}
\mathscr{D}_{M}^{\alpha,\beta }f\left( t\right) :=\underset{\varepsilon \rightarrow 0}{%
\lim }\frac {f\left( t\mathbb{E}_{\beta }\left( \varepsilon t^{-\alpha }\right)
\right) -f\left( t\right) }{\varepsilon },
\end{equation}
$\forall t>0$, where $\mathbb{E}_{\beta }\left(\cdot\right) $, $\beta >0$ is the
	Mittag-Leffler function with one parameter \rm\cite{GMM,GKAM}. Note that
	if $f$ is $\alpha$-differentiable in some interval
	$(0,a)$, $a>0$, and $\underset{t\rightarrow 0^{+}}{\lim
	}\mathscr{D}_{M}^{\alpha,\beta }f\left( t\right) $ exists, then we have

\begin{equation*}
\mathscr{D}_{M}^{\alpha,\beta }f\left( 0\right) =\underset{t\rightarrow 0^{+}}{\lim }%
\mathscr{D}_{M}^{\alpha,\beta }f\left( t\right) .
\end{equation*}
\end{definition}

\begin{theorem} If a function $f:\left[ 0,\infty \right) \rightarrow \mathbb{R}$ is $\alpha$-differentiable at $t_{0}>0$, $0<\alpha \leq 1$, $\beta>0$, then $f$ is continuous at $t_{0}$.
\end{theorem}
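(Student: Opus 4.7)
The plan is to mimic the classical proof that differentiability implies continuity, with the extra wrinkle that the incremental parameter here is $\varepsilon$ rather than $h = t - t_0$, and the perturbation enters multiplicatively through the Mittag-Leffler function.

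First I would write the algebraic identity
\begin{equation*}
f\bigl(t_{0}\mathbb{E}_{\beta}(\varepsilon t_{0}^{-\alpha})\bigr) - f(t_{0})
= \frac{f\bigl(t_{0}\mathbb{E}_{\beta}(\varepsilon t_{0}^{-\alpha})\bigr) - f(t_{0})}{\varepsilon}\cdot \varepsilon,
\end{equation*}
valid for $\varepsilon\neq 0$, and then take $\varepsilon \to 0$. The first factor tends to $\mathscr{D}_{M}^{\alpha,\beta}f(t_{0})$ by hypothesis, and the second factor tends to $0$; hence the product tends to $0$. This gives
\begin{equation*}
\lim_{\varepsilon \to 0}\Bigl[f\bigl(t_{0}\mathbb{E}_{\beta}(\varepsilon t_{0}^{-\alpha})\bigr) - f(t_{0})\Bigr] = 0.
\end{equation*}

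Next I would translate this $\varepsilon$-limit into the usual statement $\lim_{t\to t_{0}} f(t) = f(t_{0})$. Since $\mathbb{E}_{\beta}$ is an entire function with $\mathbb{E}_{\beta}(0)=1$ and $\mathbb{E}_{\beta}'(0)=1/\Gamma(1+\beta)\neq 0$, the map $\varepsilon \mapsto t_{0}\mathbb{E}_{\beta}(\varepsilon t_{0}^{-\alpha})$ is continuous at $\varepsilon=0$ with value $t_{0}$, and by the inverse function theorem it is a local homeomorphism from a neighborhood of $0$ onto a neighborhood of $t_{0}$. Therefore writing any $t$ near $t_{0}$ as $t = t_{0}\mathbb{E}_{\beta}(\varepsilon(t)\, t_{0}^{-\alpha})$ with $\varepsilon(t)\to 0$ as $t \to t_{0}$ converts the previous display into $\lim_{t\to t_{0}} f(t) = f(t_{0})$, which is exactly continuity at $t_{0}$.

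The only step requiring some care is the passage from the one-parameter limit in $\varepsilon$ to genuine continuity at $t_{0}$; this is the analogue, in the present setting, of rewriting $h\to 0$ as $t\to t_{0}$ in the classical proof. Once the local invertibility of $\varepsilon \mapsto t_{0}\mathbb{E}_{\beta}(\varepsilon t_{0}^{-\alpha})$ near $\varepsilon=0$ is noted, the rest is routine. The boundary case $\alpha=1$ requires no change since the formula for $\mathscr{D}_{M}^{\alpha,\beta}$ is uniform in $\alpha \in (0,1]$ at a fixed $t_{0}>0$.
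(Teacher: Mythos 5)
Your proof is correct, and its first half is exactly the paper's argument: the identity
$f\bigl(t_{0}\mathbb{E}_{\beta}(\varepsilon t_{0}^{-\alpha})\bigr)-f(t_{0})=\bigl[\tfrac{f(t_{0}\mathbb{E}_{\beta}(\varepsilon t_{0}^{-\alpha}))-f(t_{0})}{\varepsilon}\bigr]\cdot\varepsilon$
followed by the product of limits, giving $\lim_{\varepsilon\to 0}f\bigl(t_{0}\mathbb{E}_{\beta}(\varepsilon t_{0}^{-\alpha})\bigr)=f(t_{0})$. The paper stops at that point and simply declares continuity, whereas you correctly identify and close the remaining gap: the $\varepsilon$-limit by itself only gives continuity along the curve $\varepsilon\mapsto t_{0}\mathbb{E}_{\beta}(\varepsilon t_{0}^{-\alpha})$, and one needs the local invertibility of this map near $\varepsilon=0$ (valid because $t_{0}>0$ and $\mathbb{E}_{\beta}'(0)=1/\Gamma(\beta+1)\neq 0$) to rewrite every $t$ near $t_{0}$ as $t_{0}\mathbb{E}_{\beta}(\varepsilon(t)\,t_{0}^{-\alpha})$ with $\varepsilon(t)\to 0$ and so obtain $\lim_{t\to t_{0}}f(t)=f(t_{0})$. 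Your version is therefore the more complete of the two; no corrections are needed.
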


\begin{proof} Indeed, let us consider the identity
\begin{equation}\label{B}
f\left( t_{0}\mathbb{E}_{\beta }\left( \varepsilon t_{0}^{-\alpha }\right) \right)
-f\left( t_{0}\right) =\left( \frac{f\left( t_{0}\mathbb{E}_{\beta }\left(
\varepsilon t_{0}^{-\alpha }\right) \right) -f\left( t_{0}\right) }{%
\varepsilon }\right) \varepsilon .
\end{equation}
Applying the limit $\varepsilon \rightarrow 0$ on both sides of {\rm\text{Eq}.\rm(\ref{B})}, we have
\begin{eqnarray*}
\underset{\varepsilon \rightarrow 0}{\lim }f\left( t_{0}\mathbb{E}_{\beta
}\left( \varepsilon t_{0}^{-\alpha }\right) \right) -f\left( t_{0}\right) 
&=&\underset{\varepsilon \rightarrow 0}{\lim }\left( \frac{f\left(
t_{0}\mathbb{E}_{\beta }\left( \varepsilon t_{0}^{-\alpha }\right) \right) -f\left(
t_{0}\right) }{\varepsilon }\right) \underset{\varepsilon \rightarrow 0}{%
\lim }\varepsilon   \notag \\
&=&\mathscr{D}_{M}^{\alpha,\beta }f\left( t\right) \underset{\varepsilon \rightarrow 0}{\lim }\varepsilon   \notag \\
&=&0.
\end{eqnarray*}

Then, $f$ is continuous at $t_{0}$.
\end{proof}

Using the definition of the one-parameter Mittag-Leffler function, we have
\begin{equation}\label{A}
f\left( t\mathbb{E}_{\beta }\left( \varepsilon t^{-\alpha }\right) \right) =f\left( t%
\overset{\infty }{\underset{k=0}{\sum }}\frac{\left( \varepsilon t^{-\alpha
}\right) ^{k}}{\Gamma \left( \beta k+1\right) }\right) .
\end{equation}

Apply the limit $\varepsilon\rightarrow 0$ on both sides of Eq.(\ref{A}); 
since $f$ is a continuous function, we have
\begin{eqnarray}
\underset{\varepsilon \rightarrow 0}{\lim }f\left( t\mathbb{E}_{\beta }\left(
\varepsilon t^{-\alpha }\right) \right)  &=&\underset{\varepsilon
\rightarrow 0}{\lim }f\left( t\overset{\infty }{\underset{k=0}{\sum }}%
\frac{\left( \varepsilon t^{-\alpha }\right) ^{k}}{\Gamma \left( \beta
k+1\right) }\right)   \notag \\
&=&f\left( t\underset{\varepsilon \rightarrow 0}{\lim }\overset{\infty }{%
\underset{k=0}{\sum }}\frac{\left( \varepsilon t^{-\alpha }\right) ^{k}}{%
\Gamma \left( \beta k+1\right) }\right)   \notag \\
&=&f\left( t\right) ,
\end{eqnarray}
because when $\varepsilon \rightarrow 0$ in the Mittag-Leffler function,
the only term that contributes to the sum is $k=0$, so that 
\begin{equation*}
\underset{\varepsilon \rightarrow 0}{\lim }\overset{\infty }{\underset{%
k=0}{\sum }}\frac{\left( \varepsilon t^{-\alpha }\right) ^{k}}{\Gamma \left(
\beta k+1\right) }=1.
\end{equation*}

We present here a theorem that encompasses the main classical properties of
integer order derivatives, in particular of order one. As for the chain rule,
it will be verified by means of an example, as we will see in the sequence.

\begin{theorem}\label{A1} Let $0<\alpha \leq 1$, $\beta>0$, $a,b\in\mathbb{R}$ and $f, g$
	$\alpha$-differentiable at the point $t>0$. Then:

\begin{enumerate}

\item $ {\rm (Linearity)}\text{ } \mathscr{D}_{M}^{\alpha,\beta }\left( af+bg\right) \left( t\right) =a \mathscr{D}_{M}^{\alpha,\beta }f\left(
t\right) +b \mathscr{D}_{M}^{\alpha,\beta }g\left( t\right) $.

\begin{proof} Using {\rm\text{Definition 1}}, we have
\begin{eqnarray*}
\mathscr{D}_{M}^{\alpha,\beta }\left( af+bg\right) \left( t\right)  &=&\underset{\varepsilon
\rightarrow 0}{\lim }\frac{\left( af+bg\right) \left( t\mathbb{E}_{\beta }\left(
\varepsilon t^{-\alpha }\right) \right) -\left( af+bg\right) \left( t\right) 
}{\varepsilon }  \notag \\
&=&\underset{\varepsilon \rightarrow 0}{\lim }\frac{af\left( t\mathbb{E}_{\beta
}\left( \varepsilon t^{-\alpha }\right) \right) +bg\left( t\mathbb{E}_{\beta }\left(
\varepsilon t^{-\alpha }\right) \right) -af\left( t\right) -bg\left(
t\right) }{\varepsilon }  \notag \\
&=&\underset{\varepsilon \rightarrow 0}{\lim }\frac{af\left( t\mathbb{E}_{\beta
}\left( \varepsilon t^{-\alpha }\right) \right) -af\left( t\right) }{%
\varepsilon }+\underset{\varepsilon \rightarrow 0}{\lim }\frac{bg\left(
t\mathbb{E}_{\beta }\left( \varepsilon t^{-\alpha }\right) \right) -bg\left(
t\right) }{\varepsilon }  \notag \\
&=&a\mathscr{D}_{M}^{\alpha,\beta }f\left( t\right) +b\mathscr{D}_{M}^{\alpha,\beta }g\left( t\right) 
\end{eqnarray*}
\end{proof}

\item{\rm (Product Rule)}\text{ } $ \mathscr{D}_{M}^{\alpha,\beta }\left( f\cdot g\right) \left( t\right)
	=f\left( t\right) \mathscr{D}_{M}^{\alpha,\beta }g\left( t\right)
		+g\left( t\right) \mathscr{D}_{M}^{\alpha,\beta }f\left(
		t\right)$.

\begin{proof} Using {\rm Definition \ref{def5}}, we have

\begin{eqnarray*}
\mathscr{D}_{M}^{\alpha,\beta }\left( f\cdot g\right) \left( t\right)  &=&\underset%
{\varepsilon \rightarrow 0}{\lim }\frac{f\left( t\mathbb{E}_{\beta }\left(
\varepsilon t^{-\alpha }\right) \right) g\left( t\mathbb{E}_{\beta }\left(
\varepsilon t^{-\alpha }\right) \right) -f\left( t\right) g\left( t\right) }{%
\varepsilon } \\
&=&\underset{\varepsilon \rightarrow 0}{\lim }\left( \frac{%
\begin{array}{c}
f\left( t\mathbb{E}_{\beta }\left( \varepsilon t^{-\alpha }\right) \right)
g\left( t\mathbb{E}_{\beta }\left( \varepsilon t^{-\alpha }\right) \right)
+f\left( t\right) g\left( t\mathbb{E}_{\beta }\left( \varepsilon t^{-\alpha
}\right) \right) - \\ 
-f\left( t\right) g\left( t\mathbb{E}_{\beta }\left( \varepsilon t^{-\alpha
}\right) \right) -f\left( t\right) g\left( t\right) 
\end{array}%
}{\varepsilon }\right)  \\
&=&\underset{\varepsilon \rightarrow 0}{\lim }\left( \frac{f\left( t\mathbb{E%
}_{\beta }\left( \varepsilon t^{-\alpha }\right) \right) -f\left( t\right) }{%
\varepsilon }\right) \underset{\varepsilon \rightarrow 0}{\lim }g\left( t%
\mathbb{E}_{\beta }\left( \varepsilon t^{-\alpha }\right) \right) + \\
&&+\underset{\varepsilon \rightarrow 0}{\lim }\left( \frac{g\left( t\mathbb{E%
}_{\beta }\left( \varepsilon t^{-\alpha }\right) \right) -g\left( t\right) }{%
\varepsilon }\right) f\left( t\right)  \\
&=&\mathscr{D}_{M}^{\alpha,\beta }\left( f\right) \left( t\right) \underset{%
\varepsilon \rightarrow 0}{\lim }g\left( t\mathbb{E}_{\beta }\left(
\varepsilon t^{-\alpha }\right) \right) +\mathscr{D}_{M}^{\alpha,\beta }\left(
g\right) \left( t\right) f\left( t\right)  \\
&=&\mathscr{D}_{M}^{\alpha,\beta }\left( f\right) \left( t\right) g\left( t\right)
+\mathscr{D}_{M}^{\alpha,\beta }\left( g\right) \left( t\right) f\left( t\right) ,
\end{eqnarray*}
because $\underset{\varepsilon \rightarrow 0}{\lim }g\left( t\mathbb{E}_{\beta }\left(
\varepsilon t^{-\alpha }\right) \right) =g\left( t\right) $.
\end{proof}

\item{\rm (Quotient rule)}\text{ } $\displaystyle\mathscr{D}_{M}^{\alpha,\beta }\left( \frac{f}{g}\right)
	\left( t\right) =\frac{g\left(t\right) \mathscr{D}_{M}^{\alpha,\beta
	}f\left( t\right) -f\left( t\right) \mathscr{D}_{M}^{\alpha,\beta
	}g\left( t\right) }{\left[ g\left( t\right) \right] ^{2}} $.

\begin{proof} Using {\rm Definition \ref{def5}}, we have

\begin{eqnarray*}
\mathscr{D}_{M}^{\alpha,\beta }\left( \frac{f}{g}\right) \left( t\right)  &=&\underset{%
\varepsilon \rightarrow 0}{\lim }\frac{\displaystyle\frac{f\left( t\mathbb{E}_{\beta }\left(
\varepsilon t^{-\alpha }\right) \right) }{g\left( t\mathbb{E}_{\beta }\left(
\varepsilon t^{-\alpha }\right) \right) }-\frac{f\left( t\right) }{g\left(
t\right) }}{\varepsilon }  \notag \\
&=&\underset{\varepsilon \rightarrow 0}{\lim }\frac{\displaystyle\frac{g\left(
t\right) f\left( t\mathbb{E}_{\beta }\left( \varepsilon t^{-\alpha }\right) \right)
-f\left( t\right) g\left( t\mathbb{E}_{\beta}\left( \varepsilon t^{-\alpha }\right)
\right) +f\left( t\right) g\left( t\right) -f\left( t\right) g\left(
t\right) }{\varepsilon }}{g\left( t\mathbb{E}_{\beta }\left( \varepsilon t^{-\alpha
}\right) \right) g\left( t\right) }  \notag \\
&=&\frac{\underset{\varepsilon \rightarrow 0}{\lim }\displaystyle\frac{g\left(
t\right) \left( f\left( t\mathbb{E}_{\beta}\left( \varepsilon t^{-\alpha }\right)
\right) -f\left( t\right) \right) }{\varepsilon }-\underset{\varepsilon
\rightarrow 0}{\lim }\frac{f\left( t\right) \left( g\left( t\mathbb{E}_{\beta
}\left( \varepsilon t^{-\alpha }\right) \right) -g\left( t\right) \right) }{%
\varepsilon }}{\underset{\varepsilon \rightarrow 0}{\lim }g\left(
t\mathbb{E}_{\beta }\left( \varepsilon t^{-\alpha }\right) \right) g\left( t\right) }
\notag \\
&=&\frac{g\left( t\right) \mathscr{D}_{M}^{\alpha,\beta }f\left( t\right) -f\left( t\right)
\mathscr{D}_{M}^{\alpha,\beta }g\left( t\right) }{\left[ g\left( t\right) \right] ^{2}},
\end{eqnarray*}
because $\underset{\varepsilon \rightarrow 0}{\lim }g\left( t\mathbb{E}_{\beta }\left(
\varepsilon t^{-\alpha }\right) \right) =g\left( t\right) $.
\end{proof}

\item $ \mathscr{D}_{M}^{\alpha,\beta }\left( c\right) =0 $, where $f(t)=c$ is
	a constant. 

\begin{proof}
	The result follows directly from {\rm Definition \ref{def5}}.
\end{proof} 

\item If, furthermore, $f$ is differentiable, then
	$\mathscr{D}_{M}^{\alpha,\beta } \left( f\right)(t)=
	\displaystyle\frac{t^{1-\alpha}}{\Gamma \left(\beta +1\right) }
	\frac{df(t)}{dt}. $

\begin{proof} We can write 
\begin{eqnarray}
t\mathbb{E}_{\beta }\left( \varepsilon t^{-\alpha }\right)  &=&\overset{%
\infty }{\underset{k=0}{\sum }}\frac{\left( \varepsilon t^{-\alpha }\right)
^{k}}{\Gamma \left( \beta k+1\right) }=t+\frac{\varepsilon t^{1-\alpha }}{%
\Gamma \left( \beta +1\right) }+\frac{t\left( \varepsilon t^{-\alpha
}\right) ^{2}}{\Gamma \left( 2\beta +1\right) }+\frac{t\left( \varepsilon
t^{-\alpha }\right) ^{3}}{\Gamma \left( 3\beta +1\right) }+  \notag
\label{C} \\
&&+\cdot \cdot \cdot +\frac{t\left( \varepsilon t^{-\alpha }\right) ^{n}}{%
\Gamma \left( n\beta +1\right) }+\cdot \cdot \cdot   \notag \\
&=&t+\frac{\varepsilon t^{1-\alpha }}{\Gamma \left( \beta +1\right) }%
+O\left( \varepsilon ^{2}\right) .
\end{eqnarray}
Using {\rm Definition \ref{def5}} and introducing the change
\begin{equation*}
h=\varepsilon t^{1-\alpha }\left( \frac{1}{\Gamma \left(\beta +1\right) }+O\left( \varepsilon
\right) \right) \Rightarrow \varepsilon =\frac{h}{t^{1-\alpha }\left( \frac{1%
}{\Gamma \left(\beta +1\right) }+O\left( \varepsilon \right) \right) },
\end{equation*}
in {\rm\text{Eq}.\rm(\ref{C})}, we conclude that
\begin{eqnarray*}
\mathscr{D}_{M}^{\alpha,\beta }f\left( t\right)  &=&\underset{\varepsilon \rightarrow 0}{%
\lim }\frac{f\left( t+\displaystyle\frac{\varepsilon t^{1-\alpha }}{\Gamma \left(\beta +1\right) }+O\left(
\varepsilon ^{2}\right) \right) -f\left( t\right) }{\varepsilon }  \notag \\
&=&\underset{\varepsilon \rightarrow 0}{\lim }\frac{\displaystyle\frac{f\left(
t+h\right) -f\left( t\right) }{ht^{\alpha -1}}}{\frac{1}{\Gamma \left(\beta +1\right) }\left(
1+{\Gamma \left(\beta +1\right) } O\left( \varepsilon \right) \right) }  \notag \\
&=&\frac{t^{1- \alpha}}{\Gamma \left(\beta +1\right) }\underset{\varepsilon \rightarrow 0}{%
\lim }\frac{\displaystyle\frac{f\left( t+h\right) -f\left( t\right) }{h}}{1+{\Gamma \left(\beta +1\right) } O\left( \varepsilon \right) }  \notag \\
&=&\frac{t^{1- \alpha}}{\Gamma \left(\beta +1\right) }\frac{df\left( t\right) }{dt},
\end{eqnarray*}
with $\beta>0$ and $t>0$.
\end{proof}

\item{\rm\text{(Chain rule)}} $\mathscr{D}_{M}^{\alpha,\beta }\left(f\circ g\right)(t)=f'(g(t))
	\mathscr{D}_{M}^{\alpha,\beta }g(t)$, for $f$ differentiable at $g(t)$.

\begin{proof} If $g(t)=a$ is a constant, then
\begin{equation*}
\mathscr{D}_{M}^{\alpha,\beta }\left( f\circ g\right) \left( t\right) =\mathscr{D}_{M}^{\alpha,\beta }f\left( g\left( t\right) \right) =\mathscr{D}_{M}^{\alpha,\beta }\left( f\right) \left(
a\right) =0.
\end{equation*}

On the other hand, assume that $g$ is not a constant in the neighborhood of
	$a$, that is, suppose an $\varepsilon_{0}>0$ such that $g\left(
	x_{1}\right) \neq g\left(x_{2}\right) $, $\forall x_{1},x_{2}\in \left(
	a_{0}-\varepsilon _{0},a_{0}+\varepsilon_{0}\right) $. Now, since $g$
	is continuous at $a$, for $\varepsilon $ small enough we have

\begin{eqnarray}
\mathscr{D}_{M}^{\alpha ,\beta }\left( f\circ g\right) \left( a\right)  &=&%
\underset{\varepsilon \rightarrow 0}{\lim }\frac{f\left( g\left( a\mathbb{E}%
_{\beta }\left( \varepsilon t^{-\alpha }\right) \right) \right) -f\left(
g\left( a\right) \right) }{\varepsilon }  \label{D} \\
&=&\underset{\varepsilon \rightarrow 0}{\lim }\frac{f\left( g\left( a\mathbb{%
E}_{\beta }\left( \varepsilon t^{-\alpha }\right) \right) \right) -f\left(
g\left( a\right) \right) }{g\left( a\mathbb{E}_{\beta }\left( \varepsilon
t^{-\alpha }\right) \right) -g\left( a\right) }\frac{g\left( a\mathbb{E}%
_{\beta }\left( \varepsilon t^{-\alpha }\right) \right) -g\left( a\right) }{%
\varepsilon }.  \notag
\end{eqnarray}

Introducing the change
\begin{equation*}
\varepsilon _{1}=g\left( a\mathbb{E}_{\beta }\left( \varepsilon t^{-\alpha }\right)
\right) -g\left( a\right) \Rightarrow g\left( a\mathbb{E}_{\beta }\left( \varepsilon
t^{-\alpha }\right) \right) =g\left( a\right) +\varepsilon _{1}
\end{equation*}
in {\rm\text{Eq}.\rm(\ref{D})}, we conclude that
\begin{eqnarray*}
\mathscr{D}_{M}^{\alpha,\beta }\left( f\circ g\right) \left( a\right)  &=&\underset{%
\varepsilon _{1}\rightarrow 0}{\lim }\frac{f\left( g\left( a\mathbb{E}_{\beta
}\left( \varepsilon t^{-\alpha }\right) \right) \right) -f\left( g\left(
a\right) \right) }{\varepsilon _{1}}\underset{\varepsilon \rightarrow 0}{%
\lim }\frac{g\left( a\mathbb{E}_{\beta }\left( \varepsilon t^{-\alpha }\right)
\right) -g\left( a\right) }{\varepsilon }  \notag \\
&=&f^{\prime }\left( g\left( a\right) \right) \mathscr{D}_{M}^{\alpha,\beta }g\left(
a\right) ,
\end{eqnarray*}
with $a>0$.
\end{proof}
\end{enumerate}
\end{theorem}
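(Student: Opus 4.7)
All six items follow from Definition \ref{def5} by standard manipulations, so I would organize the argument around one key ingredient, namely the expansion
\begin{equation*}
t\mathbb{E}_{\beta}(\varepsilon t^{-\alpha}) = t + \frac{\varepsilon t^{1-\alpha}}{\Gamma(\beta+1)} + O(\varepsilon^{2}),
\end{equation*}
which drops out of writing the Mittag-Leffler series term by term, together with the continuity lemma proved just before the theorem: any $\alpha$-differentiable function $g$ satisfies $\lim_{\varepsilon\to 0}g\bigl(t\mathbb{E}_{\beta}(\varepsilon t^{-\alpha})\bigr)=g(t)$. These two facts are the only analytic inputs.

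\textbf{Items (1)--(4).} Linearity is immediate: split the numerator in Definition \ref{def5} and pass the limit through the sum. The derivative of a constant is even more direct, since the numerator is identically zero. For the product rule I would use the add-and-subtract trick
\begin{equation*}
(fg)\bigl(t\mathbb{E}_{\beta}(\varepsilon t^{-\alpha})\bigr) - (fg)(t) = \bigl[f(t\mathbb{E}_{\beta})-f(t)\bigr]\,g(t\mathbb{E}_{\beta}) + f(t)\bigl[g(t\mathbb{E}_{\beta})-g(t)\bigr],
\end{equation*}
divide by $\varepsilon$, and use the continuity statement above to replace $g(t\mathbb{E}_{\beta}(\varepsilon t^{-\alpha}))$ by $g(t)$ in the limit. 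The quotient rule is proved in exactly the same way after pulling out the common denominator $g(t\mathbb{E}_{\beta})g(t)$ and applying continuity to identify the limiting denominator as $[g(t)]^{2}$.

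\textbf{Item (5).} Here I would invoke the expansion substantially. Setting $h = \varepsilon t^{1-\alpha}/\Gamma(\beta+1) + O(\varepsilon^{2})$, the increment becomes $f(t+h)-f(t)$, and $h\to 0$ as $\varepsilon\to 0$ with $h/\varepsilon \to t^{1-\alpha}/\Gamma(\beta+1)$. Writing
\begin{equation*}
\frac{f(t+h)-f(t)}{\varepsilon} = \frac{f(t+h)-f(t)}{h}\cdot\frac{h}{\varepsilon}
\end{equation*}
and using ordinary differentiability of $f$ at $t$ yields the claimed formula.

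\textbf{Item (6), the chain rule.} This is the step I expect to cause the most trouble. The easy case is $g$ constant on a neighbourhood of $t$, in which $f\circ g$ is constant and both sides vanish. Otherwise, I would choose $\varepsilon$ small enough that $g(t\mathbb{E}_{\beta}(\varepsilon t^{-\alpha}))\neq g(t)$, multiply and divide the difference quotient by $g(t\mathbb{E}_{\beta})-g(t)$, and introduce the substitution $\varepsilon_{1}=g(t\mathbb{E}_{\beta}(\varepsilon t^{-\alpha}))-g(t)$, which tends to $0$ with $\varepsilon$ by continuity of $g$; the first factor then converges to $f'(g(t))$ by differentiability of $f$ at $g(t)$, and the second factor is $\mathscr{D}_{M}^{\alpha,\beta}g(t)$ by definition. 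The genuine subtlety, and the main obstacle, is rigorously justifying that one can restrict attention to a sequence of $\varepsilon$'s on which $g(t\mathbb{E}_{\beta}(\varepsilon t^{-\alpha}))\neq g(t)$; this must be extracted from the assumption that $g$ is not locally constant, together with the injectivity of $\varepsilon\mapsto t\mathbb{E}_{\beta}(\varepsilon t^{-\alpha})$ for small $\varepsilon$, which follows from the expansion above.
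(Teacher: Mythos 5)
Your proposal follows essentially the same route as the paper for all six items: splitting the difference quotient for linearity, the add-and-subtract trick plus the continuity lemma for the product and quotient rules, the expansion $t\mathbb{E}_{\beta}(\varepsilon t^{-\alpha})=t+\varepsilon t^{1-\alpha}/\Gamma(\beta+1)+O(\varepsilon^{2})$ with the substitution $h$ for item (5), and the multiply-and-divide substitution $\varepsilon_{1}=g(t\mathbb{E}_{\beta}(\varepsilon t^{-\alpha}))-g(t)$ for the chain rule. The only difference is that you flag (correctly) the need to justify the non-vanishing of that denominator, a point the paper passes over by simply assuming $g$ is non-constant near the point.
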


The two theorems below, present some results used in the calculus of integer order.

\begin{theorem}\label{A2} Let $a \in\mathbb{R}$, $\beta>0$ and $0<\alpha \leq
	1$. Then we have the following results:
	
\begin{enumerate}
\item $\mathscr{D}_{M}^{\alpha,\beta }\left( 1\right) =0$.
\item $\mathscr{D}_{M}^{\alpha,\beta }\left( e^{at}\right) =\displaystyle\frac{t^{1-\alpha }}{\Gamma \left(\beta +1\right) }ae^{at}$.
\item $\mathscr{D}_{M}^{\alpha,\beta }\left( \sin \left( at\right) \right) =\displaystyle\frac{t^{1-\alpha }}{\Gamma \left(\beta +1\right) }a\cos \left( at\right)$ .
\item $\mathscr{D}_{M}^{\alpha,\beta }\left( \cos \left( at\right) \right) =-\displaystyle\frac{t^{1-\alpha }}{\Gamma \left(\beta +1\right) }a\sin \left( at\right) $.
\item $\mathscr{D}_{M}^{\alpha,\beta }\left( \frac{t^{\alpha }}{\alpha }\right) =\displaystyle\frac{1}{\Gamma \left(\beta +1\right) }$.
\item $\mathscr{D}_{M}^{\alpha,\beta }\left( t^{a}\right) =\displaystyle\frac{a}{\Gamma \left(\beta +1\right) }t^{a-\alpha}$.
\end{enumerate}
\end{theorem}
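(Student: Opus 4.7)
The plan is to derive all six identities as direct corollaries of part (5) of Theorem \ref{A1}, which states that whenever $f$ is classically differentiable at $t>0$, one has
$$\mathscr{D}_{M}^{\alpha,\beta} f(t) = \frac{t^{1-\alpha}}{\Gamma(\beta+1)} \, \frac{df(t)}{dt}.$$
Each of the functions in items (1)--(6) --- the constant $1$, $e^{at}$, $\sin(at)$, $\cos(at)$, $t^{\alpha}/\alpha$, and $t^{a}$ --- is smooth on $(0,\infty)$, so this closed form is applicable and reduces the computation of $\mathscr{D}_{M}^{\alpha,\beta}$ to an ordinary first-order derivative times the factor $t^{1-\alpha}/\Gamma(\beta+1)$.

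For (1) the identity $\mathscr{D}_{M}^{\alpha,\beta}(1)=0$ follows either from part (4) of Theorem \ref{A1} (the $M$-derivative of a constant vanishes) or by noting that $d(1)/dt=0$. For (2)--(4) one inserts $df/dt=ae^{at}$, $a\cos(at)$, and $-a\sin(at)$, respectively, into the closed-form expression and reads off the stated results. For (5), with $f(t)=t^{\alpha}/\alpha$ we have $df/dt=t^{\alpha-1}$, so the prefactor $t^{1-\alpha}$ cancels exactly and leaves $1/\Gamma(\beta+1)$. For (6), with $f(t)=t^{a}$ we have $df/dt=at^{a-1}$, giving $\frac{a}{\Gamma(\beta+1)}\,t^{1-\alpha}t^{a-1}=\frac{a}{\Gamma(\beta+1)}\,t^{a-\alpha}$.

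There is essentially no obstacle here; the entire content of the theorem is a sequence of substitutions into the already-established formula from Theorem \ref{A1}(5). The only minor thing to be careful about is the domain: for item (6) when $a<1$, the function $t^{a}$ is not differentiable at $t=0$, but the statement only requires $t>0$, which matches the hypothesis of Theorem \ref{A1}(5). I would therefore write the proof as a single short block in which the closed-form expression is invoked six times in succession, possibly handling (1) separately via part (4) of Theorem \ref{A1} so that the argument is uniform and transparent.
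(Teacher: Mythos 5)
Your proposal is correct and is exactly the route the paper takes: immediately after the statement the authors remark that all the identities in Theorem \ref{A2} (and \ref{A3}) are direct consequences of item 5 of Theorem \ref{A1}, i.e.\ of the closed form $\mathscr{D}_{M}^{\alpha,\beta}f(t)=\frac{t^{1-\alpha}}{\Gamma(\beta+1)}\frac{df(t)}{dt}$ applied to each smooth function in turn. Your substitutions and the remark about $t>0$ for item (6) are all consistent with the paper.
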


\begin{theorem}\label{A3} Let $0<\alpha \leq 1$, $\beta>0$ and $t>0$. Then we
	have the following results:

\begin{enumerate}

\item $\mathscr{D}_{M}^{\alpha,\beta }\left( \sin \left( \frac{1}{\alpha }t^{\alpha }\right)
\right) =\displaystyle\frac{\cos \left( \frac{1}{\alpha }t^{\alpha }\right) }{\Gamma \left(\beta +1\right) }.$

\item $\mathscr{D}_{M}^{\alpha,\beta }\left( \cos \left( \frac{1}{\alpha }t^{\alpha }\right)
\right) =-\displaystyle\frac{\sin \left( \frac{1}{\alpha }t^{\alpha }\right) }{\Gamma \left(\beta +1\right) }.$

\item $\mathscr{D}_{M}^{\alpha,\beta }\left( e^{\frac{t^{\alpha }}{\alpha }}\right) =\displaystyle\frac{e^{\frac{%
t^{\alpha }}{\alpha }}}{\Gamma \left(\beta +1\right) }.$

\end{enumerate}

\end{theorem}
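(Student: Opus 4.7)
The plan is to treat each of the three identities as an application of the chain rule from Theorem \ref{A1}, part (6), with inner function $g(t)=t^{\alpha}/\alpha$ and outer function equal to $\sin$, $\cos$, or $\exp$ respectively. A single auxiliary computation then feeds all three: by part (5) of Theorem \ref{A2}, we already know
\begin{equation*}
\mathscr{D}_{M}^{\alpha,\beta}\!\left(\frac{t^{\alpha}}{\alpha}\right)=\frac{1}{\Gamma(\beta+1)},
\end{equation*}
so the inner-function derivative needed by the chain rule is constant in $t$.

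With this in hand, for part (1) I would take $f(u)=\sin u$, $g(t)=t^{\alpha}/\alpha$, note that $f$ is differentiable at $g(t)$, and apply Theorem \ref{A1}(6) to obtain
\begin{equation*}
\mathscr{D}_{M}^{\alpha,\beta}\!\left(\sin\!\left(\tfrac{1}{\alpha}t^{\alpha}\right)\right)=f'(g(t))\,\mathscr{D}_{M}^{\alpha,\beta}g(t)=\cos\!\left(\tfrac{1}{\alpha}t^{\alpha}\right)\cdot\frac{1}{\Gamma(\beta+1)}.
\end{equation*}
Parts (2) and (3) are obtained in exactly the same manner by replacing $f$ with $\cos$ and $\exp$ respectively, using $(\cos u)'=-\sin u$ and $(e^{u})'=e^{u}$.

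As an independent cross-check (and a fallback in case one prefers to avoid invoking the chain rule), each identity can also be derived from Theorem \ref{A1}(5): since the three functions in the left-hand sides are classically differentiable in $t>0$, one has $\mathscr{D}_{M}^{\alpha,\beta}f(t)=\frac{t^{1-\alpha}}{\Gamma(\beta+1)}f'(t)$, and a direct computation of $f'(t)$ via the usual chain rule of integer-order calculus produces a factor $t^{\alpha-1}$ that cancels $t^{1-\alpha}$, leaving precisely the stated right-hand sides.

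I do not anticipate a real obstacle here; the only mild point to be careful about is checking that Theorem \ref{A1}(6) applies, namely that $g(t)=t^{\alpha}/\alpha$ is $\alpha$-differentiable at $t>0$ (given by Theorem \ref{A2}(5)) and that $\sin$, $\cos$, $\exp$ are differentiable at $g(t)$, which is immediate. Thus the whole proof reduces to one invocation of the chain rule together with Theorem \ref{A2}(5), carried out three times.
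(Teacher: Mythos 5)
Your proposal is correct, and in fact it contains the paper's own argument as its ``fallback'': the paper dispenses with Theorem \ref{A3} in one line by declaring its identities direct consequences of item 5 of Theorem \ref{A1}, i.e.\ of the formula $\mathscr{D}_{M}^{\alpha,\beta}f(t)=\frac{t^{1-\alpha}}{\Gamma(\beta+1)}f'(t)$ for classically differentiable $f$, exactly as in your cross-check where the factor $t^{\alpha-1}$ coming from the ordinary chain rule cancels $t^{1-\alpha}$. Your primary route through the local chain rule of Theorem \ref{A1}(6) combined with $\mathscr{D}_{M}^{\alpha,\beta}\bigl(t^{\alpha}/\alpha\bigr)=1/\Gamma(\beta+1)$ from Theorem \ref{A2}(5) is an equally valid and slightly more structural way to see why the answer is $t$-free: the inner derivative is already constant, so no cancellation needs to be observed. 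The only thing the item-5 route buys you is independence from the chain-rule statement, whose proof in the paper is the least rigorous part of Theorem \ref{A1} (it tacitly assumes $g\bigl(a\mathbb{E}_{\beta}(\varepsilon t^{-\alpha})\bigr)\neq g(a)$ for small $\varepsilon$); since you verify the hypotheses of both routes and they agree, there is no gap.
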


The identities in Theorem \ref{A2} and Theorem \ref{A3} are direct consequences of item 5 of Theorem \ref{A1}.

We now prove the extensions, for $\alpha$-differentiable functions in the sense of the local $M$-derivative defined in Eq.(\ref{J}), of  Rolle's theorem and the mean value and extended mean value theorems. 

\begin{theorem} {\rm\text{(Rolle's theorem for $\alpha$-differentiable functions)}}
Let $a>0$ and $f:\left[ a,b\right] \rightarrow \mathbb{R}$ be a function such that:  
\begin{enumerate}
\item $f$ is continuous on $[a,b]$;
\item $f$ is $\alpha$-differentiable on $(a,b)$ for some $\alpha\in(0,1)$;
\item  $f(a)=f(b)$.
\end{enumerate}
Then, there exists $c\in(a,b)$, such that $\mathscr{D}^{\alpha,\beta}_{M}f(c)=0$, $\beta>0$.
\end{theorem}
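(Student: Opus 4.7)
The plan is to imitate the classical proof of Rolle's theorem step by step, substituting the $M$-derivative at the final reduction step. First I would invoke continuity of $f$ on the compact interval $[a,b]$ together with the Weierstrass extreme value theorem to conclude that $f$ attains its maximum $M$ and minimum $m$ on $[a,b]$. If $M = m$, then $f$ is constant, and item 4 of Theorem \ref{A1} gives $\mathscr{D}^{\alpha,\beta}_M f(c) = 0$ for every $c \in (a,b)$, so any $c$ works.

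Otherwise $m < M$, and since $f(a) = f(b)$, at least one of these extreme values is attained at an interior point $c \in (a,b)$; without loss of generality suppose $c$ is an interior maximum, so $f(c) \ge f(t)$ in some neighborhood of $c$. The heart of the proof is to show $\mathscr{D}^{\alpha,\beta}_M f(c) = 0$ by a sign analysis of the one-sided limits of the difference quotient
\[
Q(\varepsilon) = \frac{f\bigl(c\,\mathbb{E}_{\beta}(\varepsilon c^{-\alpha})\bigr) - f(c)}{\varepsilon}.
\]
Using the expansion $\mathbb{E}_{\beta}(\varepsilon c^{-\alpha}) = 1 + \frac{\varepsilon c^{-\alpha}}{\Gamma(\beta+1)} + O(\varepsilon^{2})$ established in Eq.~(\ref{C}), together with $c > 0$ (since $c > a > 0$), I would argue that for $|\varepsilon|$ small enough, $c\,\mathbb{E}_{\beta}(\varepsilon c^{-\alpha}) > c$ when $\varepsilon > 0$ and $c\,\mathbb{E}_{\beta}(\varepsilon c^{-\alpha}) < c$ when $\varepsilon < 0$, and in both cases the point $c\,\mathbb{E}_{\beta}(\varepsilon c^{-\alpha})$ lies in the neighborhood where maximality holds.

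Consequently, the numerator of $Q(\varepsilon)$ is non-positive for all sufficiently small $\varepsilon$, so $Q(\varepsilon) \le 0$ for $\varepsilon \to 0^{+}$ and $Q(\varepsilon) \ge 0$ for $\varepsilon \to 0^{-}$. Because $f$ is $\alpha$-differentiable at $c$, the limit defining $\mathscr{D}^{\alpha,\beta}_M f(c)$ exists, so both one-sided limits must coincide and equal that common value, forcing
\[
\mathscr{D}^{\alpha,\beta}_M f(c) \le 0 \le \mathscr{D}^{\alpha,\beta}_M f(c),
\]
hence $\mathscr{D}^{\alpha,\beta}_M f(c) = 0$. The interior-minimum case is symmetric.

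The main obstacle I expect is the perturbation-sign step: one must justify, from the Mittag-Leffler series rather than from a monotonicity theorem for $\mathbb{E}_\beta$ on the whole real line, that the first-order term $\varepsilon c^{-\alpha}/\Gamma(\beta+1)$ genuinely controls the sign of $\mathbb{E}_{\beta}(\varepsilon c^{-\alpha}) - 1$ for small $\varepsilon$, so that the argument $c\,\mathbb{E}_{\beta}(\varepsilon c^{-\alpha})$ really does lie on the expected side of $c$. Once this local bijection between the sign of $\varepsilon$ and the side of $c$ is nailed down, the rest is the standard extremum-plus-squeeze argument.
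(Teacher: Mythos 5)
Your proof is correct and follows essentially the same route as the paper: locate an interior extremum via continuity and $f(a)=f(b)$, then compare the signs of the two one-sided limits of the difference quotient, which must agree because the $M$-derivative exists at $c$. In fact your version is more careful than the paper's, since you explicitly justify, via the expansion $\mathbb{E}_{\beta}(\varepsilon c^{-\alpha}) = 1 + \varepsilon c^{-\alpha}/\Gamma(\beta+1) + O(\varepsilon^{2})$, that the perturbed point lies on the side of $c$ dictated by the sign of $\varepsilon$ — the step the paper compresses into the bare assertion that the two one-sided limits ``have opposite signs.''
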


\begin{proof} Since $f$ is continuous on
	$[a,b]$ and $f(a)=f(b)$, there exists a point $c\in(a,b)$ at which
	function $f$ has a local extreme. Then,
\begin{eqnarray*}
\mathscr{D}_{M}^{\alpha,\beta }f\left( c\right)  &=&\underset{\varepsilon \rightarrow 0^{-}}{%
\lim }\frac{f\left( c\mathbb{E}_{\beta }\left( \varepsilon c^{-\alpha }\right)
\right) -f\left( c\right) }{\varepsilon }=\underset{\varepsilon \rightarrow 0^{+}}{\lim }\frac{f\left( c \mathbb{E}_{\beta
}\left( \varepsilon c^{-\alpha }\right) \right) -f\left( c\right) }{%
\varepsilon }.
\end{eqnarray*}
As $\underset{\varepsilon \rightarrow 0^{\pm}}{\lim} 
\mathbb{E}_{\beta }\left(\varepsilon c^{-\alpha }\right) =
	1$,  the two limits in the right hand side of this equation have opposite signs. Hence,
	$\mathscr{D}_{M}^{\alpha,\beta }f\left( c\right) =0$.  
\end{proof}

\begin{theorem} {\rm\text{(Mean value theorem for $\alpha$-differentiable functions)}}
Let $a>0$ and $f:\left[ a,b\right] \rightarrow \mathbb{R}$ be a function such that:
\begin{enumerate}
\item $f$ is continuous on $[a,b]$;
\item $f$ is $\alpha$-differentiable on $(a,b)$ for some $\alpha\in(0,1)$.
\end{enumerate}
Then, there exists $c\in(a,b)$ such that
\begin{equation*}
\mathscr{D}_{M}^{\alpha,\beta }f\left( c\right) =\frac{f\left( b\right) -f\left(
a\right) }{\displaystyle\frac{b^{\alpha }}{\alpha }-\frac{a^{\alpha }}{\alpha }},
\end{equation*}
with $\beta>0$.
\end{theorem}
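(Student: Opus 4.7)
The plan is to reduce the statement to the Rolle-type theorem just proved by constructing an auxiliary function that vanishes at both endpoints and whose local $M$-derivative differs from $\mathscr{D}_{M}^{\alpha,\beta}f$ only by a known constant. This is the standard mean-value-theorem trick, transplanted to the $(\alpha,\beta)$-setting; the only subtlety is choosing the correct multiplicative constant in front of the linear comparison term so that when we apply the local $M$-derivative, the Gamma factor introduced by $\mathscr{D}_{M}^{\alpha,\beta}(t^{\alpha}/\alpha)=1/\Gamma(\beta+1)$ (item 5 of Theorem \ref{A2}) disappears.

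Concretely, I would set
$$
g(t)=f(t)-f(a)-\Gamma(\beta+1)\,\frac{f(b)-f(a)}{\dfrac{b^{\alpha}}{\alpha}-\dfrac{a^{\alpha}}{\alpha}}\left(\frac{t^{\alpha}}{\alpha}-\frac{a^{\alpha}}{\alpha}\right).
$$
Then I would verify the three hypotheses of the previous theorem for $g$ on $[a,b]$: continuity on $[a,b]$ is immediate since $f$ and $t^{\alpha}$ are continuous there (note $a>0$); $\alpha$-differentiability of $g$ on $(a,b)$ follows from that of $f$ together with the $\alpha$-differentiability of $t^{\alpha}/\alpha$ for $t>0$ (guaranteed by item 5 of Theorem \ref{A2}); and by direct substitution $g(a)=0$ and $g(b)=f(b)-f(a)-\Gamma(\beta+1)\cdot\frac{f(b)-f(a)}{(b^{\alpha}-a^{\alpha})/\alpha}\cdot\frac{b^{\alpha}-a^{\alpha}}{\alpha\,\Gamma(\beta+1)}\cdot\Gamma(\beta+1)$, which after cancellation equals $0$.

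By Rolle's theorem for $\alpha$-differentiable functions, there exists $c\in(a,b)$ with $\mathscr{D}_{M}^{\alpha,\beta}g(c)=0$. Applying linearity (item 1 of Theorem \ref{A1}), the fact that $\mathscr{D}_{M}^{\alpha,\beta}$ annihilates constants (item 4 of Theorem \ref{A1}), and the identity $\mathscr{D}_{M}^{\alpha,\beta}(t^{\alpha}/\alpha)=1/\Gamma(\beta+1)$, I compute
$$
0=\mathscr{D}_{M}^{\alpha,\beta}g(c)=\mathscr{D}_{M}^{\alpha,\beta}f(c)-\Gamma(\beta+1)\,\frac{f(b)-f(a)}{\dfrac{b^{\alpha}}{\alpha}-\dfrac{a^{\alpha}}{\alpha}}\cdot\frac{1}{\Gamma(\beta+1)},
$$
and isolating $\mathscr{D}_{M}^{\alpha,\beta}f(c)$ yields the stated equality.

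I do not anticipate a serious obstacle; the only point needing care is the choice of the prefactor $\Gamma(\beta+1)$ in $g$, which is precisely calibrated to absorb the $1/\Gamma(\beta+1)$ produced by item 5 of Theorem \ref{A2} and so recover the clean denominator $\frac{b^{\alpha}}{\alpha}-\frac{a^{\alpha}}{\alpha}$. Everything else is a mechanical application of the properties established in Theorems \ref{A1} and \ref{A2} together with the Rolle theorem proved immediately above.
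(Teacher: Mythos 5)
Your construction is the same as the paper's: the identical auxiliary function with the $\Gamma(\beta+1)$ prefactor, followed by Rolle's theorem and the identity $\mathscr{D}_{M}^{\alpha,\beta}(t^{\alpha}/\alpha)=1/\Gamma(\beta+1)$. However, there is a genuine gap in the step where you check the hypotheses of Rolle's theorem, and it does not survive scrutiny. With your
$g(t)=f(t)-f(a)-\Gamma(\beta+1)\,\frac{f(b)-f(a)}{\frac{b^{\alpha}}{\alpha}-\frac{a^{\alpha}}{\alpha}}\left(\frac{t^{\alpha}}{\alpha}-\frac{a^{\alpha}}{\alpha}\right)$
one gets $g(a)=0$ but
\begin{equation*}
g(b)=f(b)-f(a)-\Gamma(\beta+1)\,\frac{f(b)-f(a)}{\frac{b^{\alpha}}{\alpha}-\frac{a^{\alpha}}{\alpha}}\left(\frac{b^{\alpha}}{\alpha}-\frac{a^{\alpha}}{\alpha}\right)=\bigl(1-\Gamma(\beta+1)\bigr)\bigl(f(b)-f(a)\bigr),
\end{equation*}
which is not $0$ unless $\Gamma(\beta+1)=1$ (i.e.\ $\beta=1$) or $f(a)=f(b)$. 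In your verification of $g(b)=0$ you inserted a factor $\frac{1}{\Gamma(\beta+1)}$ into the value of $\frac{b^{\alpha}}{\alpha}-\frac{a^{\alpha}}{\alpha}$ that is not there; the ``cancellation'' you invoke is exactly the cancellation that fails. Consequently $g(a)\neq g(b)$ in general and Rolle's theorem cannot be applied to $g$.

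The deeper issue is that the prefactor cannot be calibrated to do both jobs at once: if you drop $\Gamma(\beta+1)$ from $g$, then $g(a)=g(b)=0$ and Rolle applies, but the final identity becomes $\mathscr{D}_{M}^{\alpha,\beta}f(c)=\frac{1}{\Gamma(\beta+1)}\cdot\frac{f(b)-f(a)}{\frac{b^{\alpha}}{\alpha}-\frac{a^{\alpha}}{\alpha}}$, with an extra $1/\Gamma(\beta+1)$; if you keep it, Rolle's endpoint condition fails. A sanity check with item 5 of Theorem \ref{A1} (for differentiable $f$, $\mathscr{D}_{M}^{\alpha,\beta}f(t)=\frac{t^{1-\alpha}}{\Gamma(\beta+1)}f'(t)$) combined with Cauchy's mean value theorem confirms that the correct conclusion carries the factor $1/\Gamma(\beta+1)$, so the theorem as stated holds only when $\Gamma(\beta+1)=1$. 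To be fair, this defect is inherited: the paper's own proof uses the same $g$ and asserts without computation that it ``satisfies the conditions of Rolle's theorem,'' which it does not. You should either restate the theorem with the $1/\Gamma(\beta+1)$ factor and use the unweighted auxiliary function, or restrict to $\beta=1$.
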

\begin{theorem}
Consider the function 
\begin{equation}\label{E}
g\left( x\right) =f\left( x\right) -f\left( a\right) -{\Gamma \left(\beta +1\right) }\left( \displaystyle\frac{%
f\left( b\right) -f\left( a\right) }{\displaystyle\frac{1}{\alpha }b^{\alpha }-\displaystyle\frac{1}{%
\alpha }a^{\alpha }}\right) \left( \frac{1}{\alpha }x^{\alpha }-\frac{1}{
\alpha }a^{\alpha }\right) .
\end{equation}

Function $g$ satisfies the conditions of Rolle's theorem. Then, there exists 
	$c\in(a,b)$ such that $\mathscr{D}_{M}^{\alpha,\beta }f(c)=0$. 
	Applying the local $M$-derivative $\mathscr{D}_{M}^{\alpha,\beta }$ to both sides
	of {\rm\text{Eq}.\rm(\ref{E})} and using the fact that
	$\mathscr{D}_{M}^{\alpha,\beta }\left( \frac{t^{\alpha }}{\alpha
	}\right) =\frac{1}{\Gamma \left(\beta +1\right) }$ and 
	$\mathscr{D}_{M}^{\alpha,\beta }(c) = 0$, with $c$ a constant, we
	conclude that

\begin{equation*}
\mathscr{D}_{M}^{\alpha,\beta }f\left( c\right) =\frac{f\left( b\right) -f\left(
a\right) }{\displaystyle\frac{b^{\alpha }}{\alpha }-\frac{a^{\alpha }}{\alpha }}.
\end{equation*}
\end{theorem}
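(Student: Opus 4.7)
My plan is to verify that $g$, as given in Eq.~(\ref{E}), satisfies the three hypotheses of Rolle's theorem on $[a,b]$, and then apply $\mathscr{D}_{M}^{\alpha,\beta}$ to both sides of that definition to recover the desired formula for $\mathscr{D}_{M}^{\alpha,\beta}f(c)$.

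For the Rolle hypotheses, I would first observe that $g$ is continuous on $[a,b]$ because it is a linear combination of $f$ (continuous by the MVT hypothesis) and $x^{\alpha}/\alpha - a^{\alpha}/\alpha$, which is continuous on $[a,b]$ for $a>0$. Next I would verify $\alpha$-differentiability on $(a,b)$ by combining linearity (item~1 of Theorem~\ref{A1}) with the $\alpha$-differentiability of $x^{\alpha}/\alpha$ recorded in item~5 of Theorem~\ref{A2}. Finally, the equality $g(a)=g(b)$ would be established by direct substitution: at $x=a$ the last factor $x^{\alpha}/\alpha - a^{\alpha}/\alpha$ vanishes and the first two terms cancel to zero, while at $x=b$ the coefficient of that factor is engineered so that the remaining pieces offset $f(b)-f(a)$.

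Rolle's theorem then produces $c \in (a,b)$ with $\mathscr{D}_{M}^{\alpha,\beta}g(c)=0$. I would then apply $\mathscr{D}_{M}^{\alpha,\beta}$ to Eq.~(\ref{E}) termwise, using linearity, item~4 of Theorem~\ref{A1} (derivative of the constant $f(a)$ is zero), and $\mathscr{D}_{M}^{\alpha,\beta}(x^{\alpha}/\alpha)=1/\Gamma(\beta+1)$ from item~5 of Theorem~\ref{A2}. The factor $\Gamma(\beta+1)$ placed in front of the fraction in Eq.~(\ref{E}) is there precisely to cancel this $1/\Gamma(\beta+1)$ after differentiation, so evaluating at $x=c$ and solving for $\mathscr{D}_{M}^{\alpha,\beta}f(c)$ should give the stated identity.

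The main obstacle I anticipate is the bookkeeping of the $\Gamma(\beta+1)$ normalization: it must sit in Eq.~(\ref{E}) exactly in the right place so that, simultaneously, $g(b)=0$ holds for Rolle's theorem and, after differentiation, the factor cancels the $1/\Gamma(\beta+1)$ coming from item~5 of Theorem~\ref{A2}. Since these two roles pull on the same prefactor in opposite ways, this is the step where the argument is most delicate, and any algebraic slip would leave an unwanted factor of $\Gamma(\beta+1)$ trailing in the final conclusion.
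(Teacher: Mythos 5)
You take the same route as the paper: introduce the auxiliary function $g$ of Eq.~(\ref{E}), check the hypotheses of Rolle's theorem, and differentiate termwise. The step that fails is the one you describe as ``direct substitution at $x=b$.'' Evaluating Eq.~(\ref{E}) at $x=b$ gives
\begin{equation*}
g\left( b\right) =\bigl(f\left( b\right) -f\left( a\right)\bigr)\bigl(1-\Gamma \left( \beta +1\right)\bigr),
\end{equation*}
which vanishes only when $\Gamma(\beta+1)=1$ (e.g.\ $\beta=1$) or $f(b)=f(a)$. For general $\beta>0$ the hypothesis $g(a)=g(b)$ of Rolle's theorem is simply not satisfied, so no point $c$ with $\mathscr{D}_{M}^{\alpha,\beta}g(c)=0$ is produced. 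Your closing paragraph correctly senses that the prefactor $\Gamma(\beta+1)$ is being asked to do two incompatible jobs, but you then assert that the coefficient ``is engineered so that the remaining pieces offset $f(b)-f(a)$,'' and that is precisely the claim that is false. The paper's own proof skips this verification silently, so the gap is inherited from the source; your plan, executed honestly, would expose it rather than close it.

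The tension cannot be fixed by bookkeeping. If you drop the $\Gamma(\beta+1)$ from Eq.~(\ref{E}) so that $g(a)=g(b)=0$, then differentiating with $\mathscr{D}_{M}^{\alpha,\beta}\left(\frac{x^{\alpha}}{\alpha}\right)=\frac{1}{\Gamma(\beta+1)}$ yields $\mathscr{D}_{M}^{\alpha,\beta}f(c)=\frac{1}{\Gamma(\beta+1)}\cdot\frac{f(b)-f(a)}{b^{\alpha}/\alpha-a^{\alpha}/\alpha}$, i.e.\ the conclusion acquires an extra factor $\frac{1}{\Gamma(\beta+1)}$. In fact the mean value theorem as stated is false whenever $\Gamma(\beta+1)\neq 1$: take $f(t)=\frac{t^{\alpha}}{\alpha}$ on any $[a,b]$ with $0<a<b$; then $\mathscr{D}_{M}^{\alpha,\beta}f\equiv\frac{1}{\Gamma(\beta+1)}$ by item 5 of Theorem \ref{A2}, while the right-hand side of the asserted identity equals $1$. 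The correct statement must therefore carry the factor $\frac{1}{\Gamma(\beta+1)}$ on the right, with the auxiliary function taken without the $\Gamma(\beta+1)$ prefactor. Apart from this, your verification of continuity and $\alpha$-differentiability of $g$ and the termwise differentiation using linearity and the vanishing of the derivative of constants are fine and match the paper's intent.
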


\begin{theorem}{\rm\text{(Extended mean value theorem for fractional 	$\alpha$-differentiable functions)}} Let $a>0$ and $f,g:\left[a,b\right] \rightarrow \mathbb{R}$ functions such that:
\begin{enumerate}
\item $f, g$ are continuous on $[a,b]$;
\item $f, g$ are $\alpha$-differentiable for some $\alpha\in(0,1)$.
\end{enumerate}
Then, there exists $c\in(a,b)$ such that
\begin{equation}
\frac{\mathscr{D}_{M}^{\alpha,\beta }f\left( c\right) }{\mathscr{D}_{M}^{\alpha,\beta }g\left( c\right) }=\frac{f\left( b\right) -f\left( a\right) }{g\left(
b\right) -g\left( a\right) },
\end{equation}
with $\beta>0$.
\end{theorem}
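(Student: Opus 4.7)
The plan is to mimic the classical proof of Cauchy's mean value theorem, using the auxiliary-function trick and then invoking Rolle's theorem for $\alpha$-differentiable functions proved earlier in this section. Specifically, I would introduce the auxiliary function
\begin{equation*}
h(x) = f(x) - f(a) - \frac{f(b)-f(a)}{g(b)-g(a)}\bigl(g(x) - g(a)\bigr),
\end{equation*}
defined on $[a,b]$. A direct substitution shows $h(a)=0$ and $h(b)=0$, so the endpoint condition of Rolle's theorem is satisfied.

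Next I would verify the regularity hypotheses for $h$. Since $f$ and $g$ are continuous on $[a,b]$ and $\alpha$-differentiable on $(a,b)$, and constants plus scalar multiples preserve both properties (continuity is standard, and $\alpha$-differentiability is preserved by linearity from item~(1) of Theorem~\ref{A1} together with the fact that $\mathscr{D}_M^{\alpha,\beta}(c)=0$ from item~(4)), it follows that $h$ is continuous on $[a,b]$ and $\alpha$-differentiable on $(a,b)$. Thus $h$ satisfies all three hypotheses of Rolle's theorem for $\alpha$-differentiable functions, and so there exists $c\in(a,b)$ with $\mathscr{D}_M^{\alpha,\beta}h(c)=0$.

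Finally I would apply $\mathscr{D}_M^{\alpha,\beta}$ to the definition of $h$ and use linearity together with $\mathscr{D}_M^{\alpha,\beta}(\mathrm{const})=0$ to get
\begin{equation*}
0 = \mathscr{D}_M^{\alpha,\beta}h(c) = \mathscr{D}_M^{\alpha,\beta}f(c) - \frac{f(b)-f(a)}{g(b)-g(a)}\,\mathscr{D}_M^{\alpha,\beta}g(c),
\end{equation*}
from which the desired identity follows upon dividing by $\mathscr{D}_M^{\alpha,\beta}g(c)$.

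The main obstacle — or rather the subtle point — is justifying the final division: the statement as written implicitly requires $g(b)\neq g(a)$ (otherwise the auxiliary $h$ is not even defined) and $\mathscr{D}_M^{\alpha,\beta}g(c)\neq 0$ at the chosen $c$. The first condition should be noted as a standing hypothesis, and the second follows from the first: if $\mathscr{D}_M^{\alpha,\beta}g$ vanished at every point produced by the construction, then applying Rolle's theorem directly to $g$ on $[a,b]$ would force $g(a)=g(b)$, contradicting the hypothesis. With these caveats handled, the proof reduces entirely to the bookkeeping described above.
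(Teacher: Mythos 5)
Your proof is correct and is essentially the paper's own argument: the same auxiliary function $F(x)=f(x)-f(a)-\frac{f(b)-f(a)}{g(b)-g(a)}\left(g(x)-g(a)\right)$, the same appeal to Rolle's theorem for $\alpha$-differentiable functions, and the same linearity computation. Your closing remark about needing $g(b)\neq g(a)$ and $\mathscr{D}_{M}^{\alpha,\beta}g(c)\neq 0$ identifies a genuine gap that the paper leaves unaddressed, though your proposed fix is not airtight (Rolle only gives the implication $g(a)=g(b)\Rightarrow\exists\,c$ with vanishing derivative, not its converse, so $\mathscr{D}_{M}^{\alpha,\beta}g$ could still vanish at the particular $c$ produced; the clean repair is to state the conclusion in the product form $\mathscr{D}_{M}^{\alpha,\beta}f(c)\left(g(b)-g(a)\right)=\mathscr{D}_{M}^{\alpha,\beta}g(c)\left(f(b)-f(a)\right)$ or to assume $\mathscr{D}_{M}^{\alpha,\beta}g$ is nonvanishing on $(a,b)$).
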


\begin{proof}
Consider the function
\begin{equation}\label{F}
F\left( x\right) =f\left( x\right) -f\left( a\right) -\left( \frac{f\left(
b\right) -f\left( a\right) }{g\left( b\right) -g\left( a\right) }\right)
\left( g\left( x\right) -g\left( a\right) \right). 
\end{equation}

As $F$ is continuous on $[a,b]$, $\alpha$-differentiable on $(a,b)$ and
	$F(a)=0=F(b)$, by Rolle's theorem there exists $c\in(a,b)$ such that
	$\mathscr{D}_{M}^{\alpha,\beta }F(c)=0$ for some $\alpha\in(0,1)$.
	Then, applying the local $M$-derivative
	$\mathscr{D}_{M}^{\alpha,\beta }$ to both sides of
	{\rm\text{Eq}.\rm(\ref{F})} and using the fact that
	$\mathscr{D}_{M}^{\alpha,\beta }(c) = 0$ when $c$ is a constant, we
	conclude that
\begin{equation*}
\frac{\mathscr{D}_{M}^{\alpha,\beta }f\left( c\right) }{\mathscr{D}_{M}^{\alpha,\beta }g\left( c\right) }=\frac{f\left( b\right) -f\left( a\right) }{g\left(
b\right) -g\left( a\right) }.
\end{equation*}
\end{proof}

\begin{definition} Let $\beta>0$, $\alpha\in(n,n+1]$, for some $n\in\mathbb{N}$ and $f$ 	$n$ times differentiable \rm{(in the classical sense)} for $t>0$. Then the local $M$-derivative of order $n$ of $f$ is defined by 
	\begin{equation}
		\mathscr{D}_{M}^{\alpha,\beta;n}f\left( t\right)
		:=\underset{\varepsilon \rightarrow 0}{\lim }\frac
			{f^{(n)}\left( t\mathbb{E}_{\beta }\left( \varepsilon
			t^{n-\alpha }\right) \right) -f^{(n)}\left( t\right)
			}{\varepsilon },
	\end{equation} 
	if and only if the limit exists.  \end{definition}

From Definition 6 and the chain rule, that is from item 5 of Theorem 2, by
induction on $n$, we can prove that $\mathscr{D}_{M}^{\alpha,\beta,n}f\left(
t\right)= \displaystyle\frac{t^{n+1-\alpha}}{\Gamma \left(\beta +1\right) }
f^{(n+1)}(t)$, $\alpha\in(n,n+1]$ and so $f$ is
$(n+1)$-differentiable for $t>0$.

\section{$M$-integral}

In this section we introduce the concept of $M$-integral of a function $f$. From this definition we can prove some results similar to classical results such as the inverse property, the fundamental theorem of calculus and the theorem of integration by parts. Other results about the $M$-integral are also presented. In preparing this section we made extensive use of references \cite{RMAM,UNK,OSEN,TA}.

\begin{definition}{\rm\text{($M$-integral)}} Let $a\geq 0$ and $t\geq
	a$. Let $f$ be a function defined in $(a,t]$ and $0<\alpha <1$.
	Then, the $M$-integral of order $\alpha$ of a function $f$
	is defined by
\begin{equation}\label{ZA}
_{M}\mathcal{I}_{a}^{\alpha,\beta }f\left( t\right) ={\Gamma \left(\beta
	+1\right) }\int_{a}^{t}\frac{f\left( x\right) }{ x^{1-\alpha }}dx,
\end{equation}
with $\beta>0$.

\end{definition}

\begin{theorem} {\rm\text{(Inverse)}} 
	Let $a\geq 0$ and $0<\alpha < 1$. Also, let $f$ be a continuous function such that
	there exists $_{M}\mathcal{I}_{a}^{\alpha,\beta }f$. Then 
\begin{equation}
\mathscr{D}_{M}^{\alpha,\beta }(_{M}\mathcal{I}_{a}^{\alpha,\beta }f\left( t\right)) =f(t), 
\end{equation}
with $t\geq a$ and $\beta>0$.
\end{theorem}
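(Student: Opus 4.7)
The plan is to reduce the statement to the classical Fundamental Theorem of Calculus by invoking item 5 of Theorem \ref{A1}, which computes the local $M$-derivative of a classically differentiable function as a multiple of the ordinary derivative.

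First I would set $F(t) := {}_{M}\mathcal{I}_{a}^{\alpha,\beta}f(t) = \Gamma(\beta+1)\int_{a}^{t}\frac{f(x)}{x^{1-\alpha}}dx$. Since $f$ is assumed continuous on $(a,t]$ and $x^{1-\alpha}$ is continuous and nonzero on this interval (using $a\geq 0$ together with $t\geq a$, restricting attention to $t>0$), the integrand $x\mapsto f(x)/x^{1-\alpha}$ is continuous. Hence by the classical Fundamental Theorem of Calculus $F$ is differentiable at every such $t$, with
\begin{equation*}
\frac{dF(t)}{dt} = \Gamma(\beta+1)\,\frac{f(t)}{t^{1-\alpha}}.
\end{equation*}

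Next, because $F$ is differentiable in the classical sense, item 5 of Theorem \ref{A1} applies and yields
\begin{equation*}
\mathscr{D}_{M}^{\alpha,\beta}F(t) = \frac{t^{1-\alpha}}{\Gamma(\beta+1)}\,\frac{dF(t)}{dt}.
\end{equation*}
Substituting the expression for $dF/dt$ obtained above, the factors of $\Gamma(\beta+1)$ and $t^{1-\alpha}$ cancel, leaving $\mathscr{D}_{M}^{\alpha,\beta}F(t) = f(t)$, which is the desired identity.

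There is no real obstacle here: the entire argument is a two-line reduction to the classical FTC once item 5 of Theorem \ref{A1} is invoked. The only small subtlety is handling the case $a=0$, where $1/x^{1-\alpha}$ is unbounded near $0$; for this one needs to check that the integral defining $F$ still makes sense (it does, since $1-\alpha<1$ makes the singularity integrable) and that $F$ remains differentiable at each $t>0$, which is unaffected since differentiability of $F$ is a local property away from the left endpoint.
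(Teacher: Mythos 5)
Your proof is correct and follows essentially the same route as the paper: both apply item 5 of Theorem \ref{A1} to reduce $\mathscr{D}_{M}^{\alpha,\beta}$ to $\frac{t^{1-\alpha}}{\Gamma(\beta+1)}\frac{d}{dt}$ and then invoke the classical Fundamental Theorem of Calculus to differentiate the integral, after which the factors cancel. Your extra remarks on the continuity of the integrand and the integrable singularity at $a=0$ are a welcome touch of rigor that the paper omits.
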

 
\begin{proof} Indeed, using the chain rule proved in {\rm\text{Theorem} \rm\ref{A1}} we have
\begin{eqnarray}
\mathscr{D}_{M}^{\alpha,\beta }\left( _{M}\mathcal{I}_{a}^{\alpha,\beta }f\left( t\right) \right)  &=&\frac{%
t^{1-\alpha }}{\Gamma \left(\beta +1\right) }\frac{d}{dt}(_{M}\mathcal{I}_{a}^{\alpha,\beta }f\left( t\right))  \notag
\\
&=&\frac{t^{1-\alpha }}{\Gamma \left(\beta +1\right) }\frac{d}{dt}\left( {\Gamma \left(\beta +1\right) }\int_{a}^{t}\frac{%
f\left( x\right) }{x^{1-\alpha }}dx\right)   \notag \\
&=&\frac{t^{1-\alpha }}{\Gamma \left(\beta +1\right) }\left( \frac{\Gamma \left(\beta +1\right) }{t^{1-\alpha }}f\left(
t\right) \right)   \notag \\
&=&f\left( t\right) .
\end{eqnarray}
\end{proof}

We now prove the fundamental theorem of calculus in the sense of the $M$-derivative mentioned at the beginning of the paper.

\begin{theorem}\label{JK} {\rm\text{(Fundamental theorem of calculus)}} Let
	$f:\left( a,b\right) \rightarrow \mathbb{R}$ be an
	$\alpha$-differentiable function and $0<\alpha
	\leq 1$. Then, for all $t>a$ we have

\begin{equation*}
_{M}\mathcal{I}_{a}^{\alpha,\beta }\left(\mathscr{D}_{M}^{\alpha,\beta
}f\left( t\right) \right) =f\left( t\right) -f\left( a\right),
\end{equation*}
with $\beta>0$.
\end{theorem}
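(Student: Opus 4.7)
The plan is to reduce the statement to the classical Fundamental Theorem of Calculus by invoking item 5 of Theorem \ref{A1}, which identifies the local $M$-derivative with the ordinary derivative up to the multiplicative factor $t^{1-\alpha}/\Gamma(\beta+1)$. Since $f$ is assumed $\alpha$-differentiable on $(a,b)$ and, for item 5 to apply, classically differentiable, this identification is available pointwise on the interval.

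First I would substitute the explicit expression
\[
\mathscr{D}_{M}^{\alpha,\beta}f(x) = \frac{x^{1-\alpha}}{\Gamma(\beta+1)}\,\frac{df(x)}{dx}
\]
into the defining formula \eqref{ZA} of the $M$-integral. The integrand becomes
\[
\frac{\mathscr{D}_{M}^{\alpha,\beta}f(x)}{x^{1-\alpha}} = \frac{1}{\Gamma(\beta+1)}\,\frac{df(x)}{dx},
\]
so the factor $x^{1-\alpha}$ cancels cleanly and the prefactor $\Gamma(\beta+1)$ at the front of the $M$-integral cancels the $1/\Gamma(\beta+1)$ that appears here. One is left with the ordinary Riemann integral $\int_{a}^{t} f'(x)\,dx$.

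Then I would finish by applying the classical Fundamental Theorem of Calculus to conclude $\int_{a}^{t} f'(x)\,dx = f(t)-f(a)$, which is exactly the desired identity. The only real care needed is to note that $\alpha$-differentiability of $f$ together with item 5 of Theorem \ref{A1} implicitly requires $f$ to be differentiable in the classical sense on $(a,b)$, and that $f'$ be integrable on $[a,t]$ so that the classical theorem applies; this is the sole conceptual obstacle, and it is mild. No new estimates or limiting arguments are required beyond the direct algebraic cancellation described above.
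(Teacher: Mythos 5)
Your proposal is correct and follows essentially the same route as the paper's own proof: both substitute the identity $\mathscr{D}_{M}^{\alpha,\beta}f(x)=\frac{x^{1-\alpha}}{\Gamma(\beta+1)}f'(x)$ from item 5 of Theorem \ref{A1} into the $M$-integral, cancel the factors of $x^{1-\alpha}$ and $\Gamma(\beta+1)$, and invoke the classical fundamental theorem of calculus. Your explicit remark that this requires $f$ to be classically differentiable is a point the paper also (implicitly) assumes in its proof.
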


\begin{proof} In fact, since function $f$ is differentiable, using the
	chain rule of {\rm Theorem} {\rm\ref{A1}} and the  
	fundamental theorem of calculus for the integer-order derivative, we have
\begin{eqnarray}\label{Z1}
_{M}\mathcal{I}_{a}^{\alpha,\beta }\left( \mathscr{D}_{M}^{\alpha,\beta }f\left( t\right) \right)  &=&{\Gamma \left(\beta +1\right) }\int_{a}^{t}\frac{\mathscr{D}_{M}^{\alpha,\beta }f\left( t\right) }{x^{1-\alpha }}dx  \notag
\\
&=&{\Gamma \left(\beta +1\right) }\int_{a}^{t}\frac{x^{1-\alpha }}{\Gamma \left(\beta +1\right) }\frac{1}{x^{1-\alpha }}%
\frac{df\left( t\right) }{dt}dx  \notag \\
&=&\int_{a}^{t}\frac{df\left( t\right) }{dt}dx  \notag \\
&=&f\left( t\right) -f\left( a\right) 
\end{eqnarray}
\end{proof}

If the condition $f(a)=0$ holds, then by Theorem 9, Eq.(\ref{Z1}), we have $_{M}\mathcal{I}_{a}^{\alpha,\beta } \left[ {\mathscr{D}^{\alpha\,\beta}_{M}}f(t)\right]=f(t)$.

Theorem \ref{JK} can be generalized to a larger order as follows.

\begin{theorem} Let $\alpha\in(n,n+1]$ and $f:\left[a,\infty\right) \rightarrow
	\mathbb{R}$ be $(n+1)$-times differentiable for $t>a$. Then, $\forall$
	$t>a$, we have
\begin{equation*}
_{M}\mathcal{I}_{a}^{\alpha,\beta }\left( \mathscr{D}_{M}^{\alpha,\beta }f\left( t\right)
\right) =f\left( t\right) -\underset{k=0}{\overset{n}{\sum }}\frac{f^{\left(
k\right) }\left( a\right) \left( t-a\right) ^{k}}{k!},
\end{equation*}
with $\beta>0$.
\end{theorem}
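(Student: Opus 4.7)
The plan is to reduce the statement to the classical Taylor formula with integral remainder. The key tool is the representation $\mathscr{D}_{M}^{\alpha,\beta}f(t) = \frac{t^{n+1-\alpha}}{\Gamma(\beta+1)} f^{(n+1)}(t)$ for $\alpha \in (n, n+1]$, recorded immediately after Definition 6 and obtained by induction from item 5 of Theorem \ref{A1}. The base case $n = 0$ of the statement to be proved is precisely Theorem \ref{JK}, in which the sum on the right hand side collapses to the single term $f(a)$.

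For the general case I would substitute the above representation into the left hand side. Under the natural extension of Definition 7 to orders $\alpha \in (n, n+1]$ -- one which reproduces the $n = 0$ formula and carries the expected kernel $(t-x)^n / n!$ -- the factor $x^{n+1-\alpha}$ inside $\mathscr{D}_{M}^{\alpha,\beta}f$ cancels the weight $\Gamma(\beta+1)\,x^{\alpha-1-n}$ in the integrand of $_{M}\mathcal{I}_{a}^{\alpha,\beta}$, giving
\begin{equation*}
_{M}\mathcal{I}_{a}^{\alpha,\beta}\bigl(\mathscr{D}_{M}^{\alpha,\beta}f(t)\bigr) = \frac{1}{n!}\int_{a}^{t} (t-x)^{n} f^{(n+1)}(x)\,dx.
\end{equation*}
The right hand side is the integral remainder in the degree-$n$ Taylor expansion of $f$ about $a$, so it equals $f(t) - \sum_{k=0}^{n} f^{(k)}(a)(t-a)^{k}/k!$, completing the proof. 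If one prefers not to invoke Taylor's theorem as a black box, the same identity can be produced by $n+1$ successive integrations by parts starting from $\int_{a}^{t} f^{(n+1)}(x)\,dx$, each step transferring one derivative from $f$ onto the kernel and contributing the next boundary term of the Taylor polynomial.

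The main obstacle is organisational rather than computational: Definition 7 only covers $0 < \alpha < 1$, so the statement implicitly relies on extending $_{M}\mathcal{I}_{a}^{\alpha,\beta}$ to orders $\alpha \in (n, n+1]$. The natural Riemann-Liouville-type extension with kernel $\Gamma(\beta+1)(t-x)^n / (n! \, x^{n+1-\alpha})$ is essentially forced by the requirement that the fundamental theorem should hold, and once this extension is adopted the calculation collapses to the classical Taylor remainder identity. Verifying that the extended $_{M}\mathcal{I}_{a}^{\alpha,\beta}$ is still well defined (under, say, $f^{(n+1)}$ continuous on $[a,t]$) is routine since the kernel is bounded on any compact subinterval of $(0,\infty)$.
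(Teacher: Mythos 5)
Your proof is correct and follows essentially the same route as the paper's: both reduce the composition, via the representation $\mathscr{D}_{M}^{\alpha,\beta}f(t)=\frac{t^{n+1-\alpha}}{\Gamma(\beta+1)}f^{(n+1)}(t)$, to the $(n+1)$-fold iterated integral $\frac{1}{n!}\int_a^t(t-x)^n f^{(n+1)}(x)\,dx$ and then conclude by Taylor's formula with integral remainder (equivalently, repeated integration by parts). You are in fact more careful than the paper on the one genuine issue: the $M$-integral is only defined for $0<\alpha<1$, and the higher-order extension that you spell out explicitly is left implicit in the paper's own proof (which moreover introduces spurious $(t-a)^{\beta-1}$ factors that merely cancel).
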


\begin{proof} Using the definition of $M$-integral and the chain rule proved
	in {\rm\text{Theorem} \rm\ref{A1}}, we have
\begin{eqnarray}\label{L1}
_{M}\mathcal{I}_{a}^{\alpha,\beta }\left( \mathscr{D}_{M}^{\alpha,\beta }f\left( t\right)
\right)  &=&_{M}\mathcal{I}_{a}^{n+1}\left[ \left( t-a\right) ^{\beta -1}\mathscr{D}_{M}^{\alpha,\beta }f^{\left( n\right) }\left( t\right) \right]   \notag \\
&=&_{M}\mathcal{I}_{a}^{n+1}\left[ \left( t-a\right) ^{\beta -1}\left( t-a\right)
^{1-\beta }f^{\left( n+1\right) }\left( t\right) \right]   \notag \\
&=&_{M}\mathcal{I}_{a}^{n+1}\left( f^{\left( n+1\right) }\left( t\right) \right) .
\end{eqnarray}

Then, performing piecewise integration for the integer-order derivative in
	{\rm\text{Eq}.\rm(\ref{L1})}, we have
\begin{equation*}
_{M}\mathcal{I}_{a}^{\alpha,\beta }\left( \mathscr{D}_{M}^{\alpha,\beta }f\left( t\right)
\right) =f\left( t\right) -\underset{k=0}{\overset{n}{\sum }}\frac{f^{\left(
k\right) }\left( a\right) \left( t-a\right) ^{k}}{k!}.
\end{equation*}
\end{proof}

As integer-order calculus has a result known as integration by parts, we shall
now present, through a theorem, a similar result which we might call fractional integration by parts. 

We shall use the notation of Eq.(\ref{ZA}) for the $M$-integral:  

\begin{equation*}
_{M}\mathcal{I}_{a}^{\alpha,\beta }f\left( t\right) ={\Gamma \left(\beta +1\right) }\int_{a}^{t}\frac{f\left( x\right) }{%
x^{1-\alpha }}dx=\int_{a}^{t}f\left( x\right) d_{\alpha }x,
\end{equation*}
where $d_{\alpha }x=\dfrac{\Gamma \left(\beta +1\right) }{x^{1-\alpha }}dx$.

\begin{theorem} Let $f,g:[a,b]\rightarrow\mathbb{R}$ be two functions such
	that $f,g$ are differentiable and $0<\alpha<1$. Then
\begin{equation}
\int_{a}^{b}f\left( x\right) \mathscr{D}_{M}^{\alpha,\beta }g\left( x\right) d_{\alpha
}x=f\left( x\right) g\left( x\right) \mid _{a}^{b}-\int_{a}^{b}g\left(
x\right) \mathscr{D}_{M}^{\alpha,\beta }f\left( x\right) d_{\alpha }x,
\end{equation}
with $\beta>0$.
\end{theorem}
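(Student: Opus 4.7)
The plan is to derive the integration-by-parts identity from the product rule for $\mathscr{D}_M^{\alpha,\beta}$ (item 2 of Theorem~\ref{A1}) combined with the fundamental theorem of calculus (Theorem~\ref{JK}). This mirrors the classical derivation of integration by parts from the Leibniz rule together with the ordinary FTC, and since the $M$-derivative already obeys both ingredients, very little needs to be done by hand.

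First, since $f$ and $g$ are assumed differentiable on $[a,b]$, the product $fg$ is also differentiable there, so item 5 of Theorem~\ref{A1} guarantees that $\mathscr{D}_M^{\alpha,\beta}(fg)(x)$ exists and equals $\tfrac{x^{1-\alpha}}{\Gamma(\beta+1)}(fg)'(x)$. By the product rule (item 2 of Theorem~\ref{A1}), we have pointwise on $(a,b)$
\begin{equation*}
\mathscr{D}_M^{\alpha,\beta}(fg)(x) = f(x)\,\mathscr{D}_M^{\alpha,\beta} g(x) + g(x)\,\mathscr{D}_M^{\alpha,\beta} f(x).
\end{equation*}

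Next, I would apply the $M$-integral $_M\mathcal{I}_a^{\alpha,\beta}$ to both sides, evaluated at $t=b$. Writing this out with the $d_{\alpha}x$ notation introduced just before the theorem, linearity of the ordinary Riemann integral (which propagates through the weight $\Gamma(\beta+1)/x^{1-\alpha}$) yields
\begin{equation*}
\int_a^b \mathscr{D}_M^{\alpha,\beta}(fg)(x)\,d_\alpha x = \int_a^b f(x)\,\mathscr{D}_M^{\alpha,\beta} g(x)\,d_\alpha x + \int_a^b g(x)\,\mathscr{D}_M^{\alpha,\beta} f(x)\,d_\alpha x.
\end{equation*}
By the fundamental theorem of calculus (Theorem~\ref{JK}) applied to $fg$, the left-hand side collapses to $f(b)g(b)-f(a)g(a) = f(x)g(x)\big|_a^b$. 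Isolating the first integral on the right then produces precisely the stated identity.

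The only mild obstacle is a notational one: Theorem~\ref{JK} is stated for a function $f$ and an interval $(a,b)$, so I would need to observe that its hypotheses apply to $fg$ (which they do, since $fg$ is $\alpha$-differentiable on $(a,b)$ by what was noted above) and that the evaluation $_M\mathcal{I}_a^{\alpha,\beta}(\mathscr{D}_M^{\alpha,\beta}(fg)(t))$ at $t=b$ matches the integral notation $\int_a^b \mathscr{D}_M^{\alpha,\beta}(fg)(x)\,d_\alpha x$ used here. Once this bookkeeping is in place, the proof is a one-line rearrangement.
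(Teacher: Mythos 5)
Your proof is correct, but it takes a genuinely different route from the paper's. The paper argues computationally: it unwinds the $M$-integral to an ordinary Riemann integral, substitutes $\mathscr{D}_{M}^{\alpha,\beta}g(x)=\frac{x^{1-\alpha}}{\Gamma(\beta+1)}g'(x)$ (item 5 of Theorem~\ref{A1}) so that the weight $\Gamma(\beta+1)/x^{1-\alpha}$ cancels, reduces the left-hand side to $\int_a^b f(x)g'(x)\,dx$, applies \emph{classical} integration by parts, and then re-dresses the remaining integral in $M$-notation. You instead stay inside the $M$-calculus: the $M$-product rule (item 2 of Theorem~\ref{A1}) plus the $M$-fundamental theorem of calculus (Theorem~\ref{JK}) applied to $fg$, followed by linearity of $_M\mathcal{I}_a^{\alpha,\beta}$ and a rearrangement. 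Your version makes transparent that integration by parts is a formal consequence of ``product rule $+$ FTC'' in any calculus possessing both, whereas the paper's version leans directly on the fact that $\mathscr{D}_{M}^{\alpha,\beta}$ is a rescaled ordinary derivative; of course Theorem~\ref{JK} itself is proved via that same rescaling, so the two arguments rest on the same foundation in the end. Your handling of the hypotheses is also fine: differentiability of $f$ and $g$ gives differentiability (hence $\alpha$-differentiability) of $fg$, so both the product rule and Theorem~\ref{JK} apply, and the only caveat you might add explicitly --- one the paper also glosses over --- is that all of this tacitly requires $a>0$ (or at least $x>0$ on the domain of integration) for the pointwise identities of Theorem~\ref{A1} to be invoked.
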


\begin{proof}Indeed, using the definition of $M$-integral and
	applying the chain rule of {\rm\text{Theorem} \rm\ref{A1}} and the
	fundamental theorem of calculus for integer-order derivatives, we have
\begin{eqnarray*}
\int_{a}^{b}f\left( x\right)\mathscr{D}_{M}^{\alpha,\beta }g\left( x\right) d_{\alpha }x
&=&{\Gamma \left(\beta +1\right) }\int_{a}^{b}\frac{f\left( x\right) }{x^{1-\alpha }}\mathscr{D}_{M}^{\alpha,\beta }g\left( x\right) dx  \notag \\
&=&{\Gamma \left(\beta +1\right) }\int_{a}^{b}\frac{f\left( x\right) }{x^{1-\alpha }}\frac{%
x^{1-\alpha }}{\Gamma \left(\beta +1\right) }\frac{dg\left( x\right) }{dt}dx  \notag \\
&=&\int_{a}^{b}f\left( x\right) g^{\prime }\left( x\right) dx  \notag \\
&=&f\left( x\right) g\left( x\right) \mid _{a}^{b}-{\Gamma \left(\beta +1\right) }\int_{a}^{b}\frac{%
g\left( x\right) }{x^{1-\alpha }}\frac{x^{1-\alpha }}{\Gamma \left(\beta +1\right) }\frac{df\left(
x\right) }{dt}dx  \notag \\
&=&f\left( x\right) g\left( x\right) \mid _{a}^{b}-\int_{a}^{b}g\left(
x\right) \mathscr{D}_{M}^{\alpha,\beta }f\left( x\right) d_{\alpha }x,
\end{eqnarray*}
where $d_{\alpha }x=\frac{\Gamma \left(\beta +1\right) }{x^{1-\alpha }}dx$.
\end{proof}

\begin{theorem}\label{A22} Let $0<a<b$ and let $f:\left[ a,b \right] \rightarrow
	\mathbb{R}$ be a continuous function. Then, for $0<\alpha <1$ we have

\begin{equation}
\left\vert _{M}\mathcal{I}_{a}^{\alpha,\beta }f\left( t\right) \right\vert \leq (_{M}\mathcal{I}_{a}^{\alpha,\beta }\left\vert f\left( t\right) \right\vert),
\end{equation}
with $\beta>0$.
\end{theorem}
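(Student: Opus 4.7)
The plan is to reduce this to the standard triangle inequality for Riemann integrals. By Definition 7, the $M$-integral is simply a positively weighted classical integral:
\begin{equation*}
_{M}\mathcal{I}_{a}^{\alpha,\beta }f(t) = \Gamma(\beta+1)\int_{a}^{t}\frac{f(x)}{x^{1-\alpha}}\,dx,
\end{equation*}
so the only thing that needs attention is that the weight $\Gamma(\beta+1)/x^{1-\alpha}$ is nonnegative on the relevant interval. This is where the hypothesis $0 < a < b$ enters: it guarantees that $x \geq a > 0$ throughout $[a,t]$, hence $x^{1-\alpha}>0$; together with $\beta>0$ giving $\Gamma(\beta+1)>0$, the entire weight is strictly positive.

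Given that, I would first pull the absolute value through the constant $\Gamma(\beta+1)$, then apply the classical inequality $\left|\int_{a}^{t} h(x)\,dx\right| \leq \int_{a}^{t}|h(x)|\,dx$ to $h(x)=f(x)/x^{1-\alpha}$, and finally use positivity of the weight to rewrite $|f(x)/x^{1-\alpha}| = |f(x)|/x^{1-\alpha}$. The continuity of $f$ on $[a,b]$ ensures that both integrals exist in the Riemann sense, so no integrability issues arise. Chaining the steps yields
\begin{equation*}
\left|{}_{M}\mathcal{I}_{a}^{\alpha,\beta }f(t)\right| = \Gamma(\beta+1)\left|\int_{a}^{t}\frac{f(x)}{x^{1-\alpha}}\,dx\right| \leq \Gamma(\beta+1)\int_{a}^{t}\frac{|f(x)|}{x^{1-\alpha}}\,dx = {}_{M}\mathcal{I}_{a}^{\alpha,\beta }|f(t)|.
\end{equation*}

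There is essentially no obstacle: the result is a direct consequence of a property of the Riemann integral together with sign control of the weight. The only subtlety worth flagging explicitly is the role of the hypothesis $0<a$, without which $x^{1-\alpha}$ could fail to be integrable at the left endpoint for general $\alpha$; since $1-\alpha<1$ the integral $\int_{0}^{t} x^{\alpha-1}\,dx$ still converges, but working on $[a,b]$ with $a>0$ sidesteps any such discussion and keeps the argument completely elementary.
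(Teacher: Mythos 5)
Your proof is correct and follows essentially the same route as the paper's: both pull the constant $\Gamma(\beta+1)$ out, apply the classical triangle inequality for integrals to $f(x)/x^{1-\alpha}$, and use positivity of the weight to identify the result with $_{M}\mathcal{I}_{a}^{\alpha,\beta }\left\vert f\right\vert$. Your version is slightly more careful in spelling out why the weight is positive (via $a>0$ and $\beta>0$), but the argument is the same.
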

\begin{proof} From the definition of $M$-integral of order $\alpha$, we have 
\begin{eqnarray*}
\left\vert _{M}\mathcal{I}_{a}^{\alpha,\beta }f\left( t\right) \right\vert  &=&\left\vert {\Gamma \left(\beta +1\right) }\int_{a}^{t}\frac{f\left( x\right) }{x^{1-\alpha }}dx\right\vert   \notag \\
&\leq &\left\vert {\Gamma \left(\beta +1\right) }\right\vert \int_{a}^{t}\left\vert \frac{f\left(
x\right) }{x^{1-\alpha }}\right\vert dx  \notag \\
&=&_{M}\mathcal{I}_{a}^{\alpha,\beta }\left\vert f\left( t\right) \right\vert .
\end{eqnarray*}
\end{proof} 

\begin{corollary} Let $f:\left[ a,b \right] \rightarrow \mathbb{R}$ be a continuous function such that
\begin{equation}
N=\underset{t\in \left[ a,b\right] }{\sup }\left\vert f\left( t\right)
\right\vert .
\end{equation}
Then, $\forall t\in \left[ a,b\right] $ and $0<\alpha <1$, we have
\begin{equation}
\left\vert _{M}\mathcal{I}_{a}^{\alpha,\beta }f\left( t\right) \right\vert \leq {\Gamma \left(\beta +1\right) } N\left( 
\frac{t^{\alpha }}{\alpha }-\frac{a^{\alpha }}{\alpha }\right),
\end{equation}
with $\beta>0$.
\end{corollary}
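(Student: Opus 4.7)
The plan is to combine the two tools we already have for the $M$-integral: the absolute-value inequality established in Theorem \ref{A22}, and the explicit computation of $_{M}\mathcal{I}_{a}^{\alpha,\beta}$ applied to a constant, which drops straight out of Definition 7. Since $f$ is continuous on the compact interval $[a,b]$, the supremum $N$ is finite and attained, so the pointwise bound $|f(x)| \leq N$ holds on $[a,t] \subseteq [a,b]$.

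First I would invoke Theorem \ref{A22} to pass absolute values inside:
\begin{equation*}
\left\vert {}_{M}\mathcal{I}_{a}^{\alpha,\beta}f(t)\right\vert \;\leq\; {}_{M}\mathcal{I}_{a}^{\alpha,\beta}|f(t)| \;=\; \Gamma(\beta+1)\int_{a}^{t}\frac{|f(x)|}{x^{1-\alpha}}\,dx.
\end{equation*}
Next, using $|f(x)|\leq N$ on $[a,t]$ and the positivity of the weight $x^{\alpha-1}$ (recall $a\geq 0$, so we only integrate over $x>0$), I would pull $N$ out of the integral to obtain the bound $\Gamma(\beta+1)\, N \int_{a}^{t} x^{\alpha-1}\,dx$.

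Finally, a direct evaluation of the elementary integral gives $\int_{a}^{t} x^{\alpha-1}\,dx = \tfrac{t^{\alpha}}{\alpha} - \tfrac{a^{\alpha}}{\alpha}$ (this is where $0<\alpha<1$ is used only to ensure integrability near $x=0$ when $a=0$; for $a>0$ there is no issue), yielding exactly
\begin{equation*}
\left\vert {}_{M}\mathcal{I}_{a}^{\alpha,\beta}f(t)\right\vert \;\leq\; \Gamma(\beta+1)\, N\left(\frac{t^{\alpha}}{\alpha}-\frac{a^{\alpha}}{\alpha}\right).
\end{equation*}
There is no genuine obstacle here: every step is a one-line consequence of a previously proved result. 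The only mild care needed is to notice that $\Gamma(\beta+1)>0$ for $\beta>0$, so the absolute value around it in the statement of Theorem \ref{A22} can be dropped without harm, and to verify that the boundary case $a=0$ with $0<\alpha<1$ still yields a convergent improper integral of $x^{\alpha-1}$ at the origin.
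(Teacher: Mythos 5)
Your proposal is correct and follows exactly the paper's own argument: apply Theorem \ref{A22} to move the absolute value inside, bound $|f(x)|$ by $N$, and evaluate $\int_a^t x^{\alpha-1}\,dx$ explicitly. The extra remarks on positivity of $\Gamma(\beta+1)$ and integrability at $x=0$ are sensible but not needed beyond what the paper does.
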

\begin{proof} By {\rm Theorem \rm\ref{A22}}, we have
\begin{eqnarray}
\left\vert _{M}\mathcal{I}_{a}^{\alpha,\beta }f\left( t\right) \right\vert  &\leq &_{M}\mathcal{I}_{a}^{\alpha,\beta }\left\vert f\left( t\right) \right\vert   \notag \\
&=&{\Gamma \left(\beta +1\right) }\int_{a}^{t}\left\vert f\left( x\right) \right\vert x^{\alpha -1}dx
\notag \\
&\leq &{\Gamma \left(\beta +1\right) }N\int_{a}^{t}x^{\alpha -1}dx  \notag \\
&=&{\Gamma \left(\beta +1\right) }N\left( \frac{t^{\alpha }}{\alpha }-\frac{a^{\alpha }}{\alpha }%
\right) .
\end{eqnarray}
\end{proof}

\section{Relation with alternative fractional derivative}

In this section we discuss the relation between the alternative fractional derivative and the local $M$-derivative proposed here.

Katugampola \cite{UNK} proposed a new fractional derivative which he called
alternative fractional derivative, given by
\begin{equation}\label{K}
\mathcal{D}^{\alpha }f\left( t\right) =\underset{\varepsilon \rightarrow 0}{\lim }%
\frac{f\left( te^{\varepsilon t^{-\alpha }}\right) -f\left( t\right) }{%
\varepsilon },
\end{equation}
with $\alpha\in(0,1)$ and $t>0$.

It is easily seen that our definition of local $M$-derivative Eq.(\ref{J})) is more general than the alternative fractional derivative Eq.(\ref{K}).

The definition in Eq.(\ref{J}) contains the one parameter Mittag-Leffler
function $\mathbb{E}_{\beta}(\cdot)$, which can be considered a generalization
of the exponential function. Indeed, choosing $\beta=1$ in the definition of
the one parameter Mittag-Leffler function \cite{GMM,GKAM}, we have

\begin{equation}
\mathbb{E}_{\beta }\left( x\right) = \mathbb{E}_{1}\left( x\right) =\overset{\infty }{\underset{%
k=0}{\sum }}\frac{x^{k}}{\Gamma \left( k+1\right) }=e^{x}.
\end{equation}
In particular, introducing $x=\varepsilon t^{-\alpha}$ in Eq.(\ref{J}) and taking the limit $\varepsilon \rightarrow 0$ we recover the 
alternative fractional derivative $\mathcal{D}^{\alpha}$: 
\begin{equation}
\mathscr{D}_{M}^{\alpha,\beta }f\left( t\right) =\underset{\varepsilon \rightarrow 0}{\lim }%
\frac{f\left( t\mathbb{E}_{1}\left( \varepsilon t^{-\alpha }\right) \right) -f\left(
t\right) }{\varepsilon }=\underset{\varepsilon \rightarrow 0}{\lim }\frac{%
f\left( te^{\varepsilon t^{-\alpha }}\right) -f\left( t\right) }{\varepsilon 
}=\mathcal{D}^{\alpha }f\left( t\right) . 
\end{equation}

\section{Application}

Fractional linear differential equations are important in the study of fractional calculus and applications. In this section, we present the general solution of a linear differential equation by means of the local $M$-derivative. In this sense, as a particular case, we study an example and perform a graph analysis of the solution.

The general first order differential equation based on the local $M$-derivative is represented by
\begin{equation}\label{K2}
\mathscr{D}_{M}^{\alpha ,\beta }u(t)+P\left( t\right) u\left( t\right) =Q\left(
t\right).
\end{equation}
where $P\left( t\right) ,$ $Q\left( t\right) $ are $\alpha-$differentiable functions and $u\left( t\right) $ is unknown.

Using the item 5 of Theorem (\ref{A1}) in the Eq.(\ref{K2}), we have
\begin{equation}\label{K3}
\frac{d}{dt}u\left( t\right) +\frac{\Gamma \left( \beta +1\right) }{%
t^{1-\alpha }}P\left( t\right) u\left( t\right) =\frac{\Gamma \left( \beta
+1\right) }{t^{1-\alpha }}Q\left( t\right).
\end{equation}

The Eq.(\ref{K3}) is a first order equation, whose general solution is given by
\begin{equation*}
u\left( t\right) =e^{-\Gamma \left( \beta +1\right) \int \frac{P\left(
t\right) }{t^{1-\alpha }}dt}\left( \Gamma \left( \beta +1\right) \int \frac{%
Q\left( t\right) }{t^{1-\alpha }}e^{\Gamma \left( \beta +1\right) \int \frac{%
P\left( t\right) }{t^{1-\alpha }}dt}dt+C\right),
\end{equation*}
where $C$ is an arbitrary constant.

By definition of $M$-integral, we conclude that the solution is given by
\begin{equation*}
u\left( t\right) =e^{-\text{ }_{M}I_{a}^{\alpha ,\beta }\left( P\left(
t\right) \right) }\left( _{M}I_{a}^{\alpha ,\beta }\left( Q\left( t\right)
e^{_{M}I_{a}^{\alpha ,\beta }\left( P\left( t\right) \right) }\right)
+C\right).
\end{equation*}

Now let us choose some values and functions and make an example using the linear differential equation previously studied by means of the local $M$-derivative. Then, taking $P(t)=-\lambda$, $Q(t)=0$, $u(0)=u_{0}$, $a=0$, $0<\alpha\leq 1$ e $\beta>0$, we have the following linear differential equation \begin{equation}\label{K5}
\mathscr{D}_{M}^{\alpha ,\beta }u(t)=\lambda u\left( t\right),
\end{equation}
whose solution is given by
\begin{equation*}
u\left( t\right) = u_{0} \text{ }e^{\frac{-\lambda }{\alpha }\Gamma \left( \beta
+1\right) t^{\alpha }}=u_{0}\mathbb{E}_{1}\left( {\frac{-\lambda }{\alpha }\Gamma \left( \beta
+1\right) t^{\alpha }}\right),
\end{equation*}
where $\mathbb{E}_{1}(\cdot)$ is Mittag-Leffler function.

\begin{figure}[h!]
\caption{Analytical solution of the {\rm Eq.(\ref{K5})}. We consider the values $ \beta = 0.5 $, $\lambda$=1 and $u_{0}$=20.}
\centering 
\includegraphics[width=12cm]{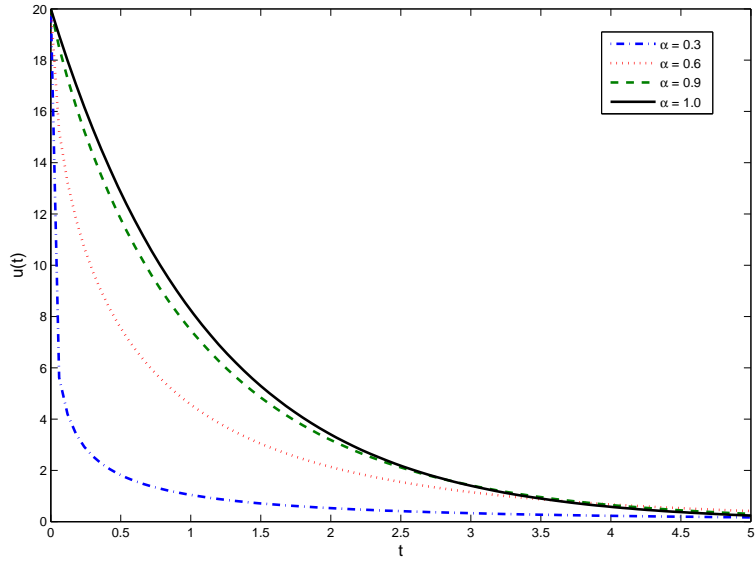} 
\label{fig:eryt1}
\end{figure}

\begin{figure}[h!]
\caption{Analytical solution of the {\rm Eq.(\ref{K5})}. We take the values $ \beta = 1.0 $, $\lambda$=2 and $u_{0}$=20}
\centering 
\includegraphics[width=12cm]{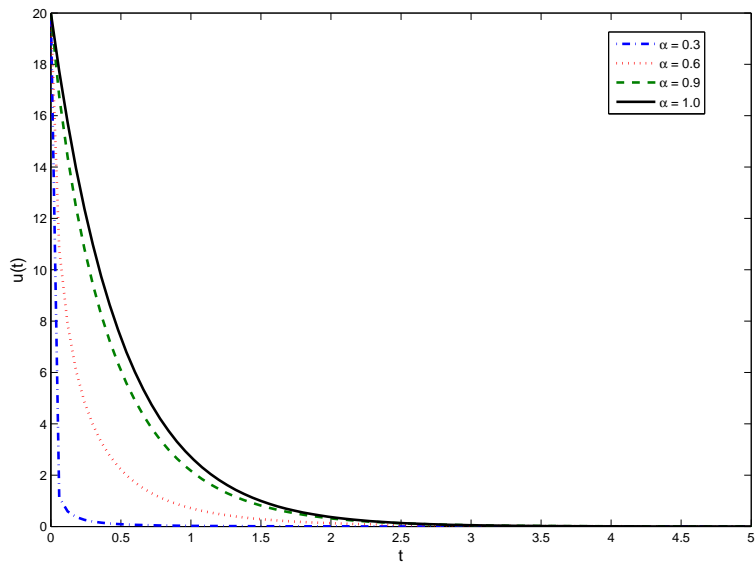} 
\label{fig:eryt1}
\end{figure}
\newpage
\begin{figure}[h!]
\caption{Analytical solution of the {\rm Eq.(\ref{K5})}. We chose the values $ \beta = 1.5 $, $\lambda$=2.5 and $u_{0}$=20.}
\centering 
\includegraphics[width=12cm]{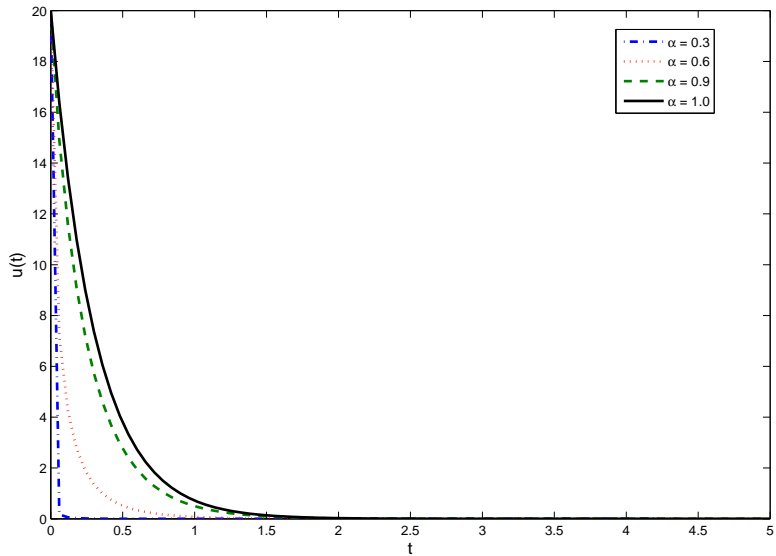} 
\label{fig:eryt1}
\end{figure}

\section{Concluding remarks} 

We introduced a new derivative, the local $M$-derivative, and its corresponding $M$-integral. We could prove important results concerning integer order derivatives of this kind, in particular, derivatives
of order one. For $\alpha$-differentiable functions in the context of local $M$-derivatives we could show that the derivative proposed here behaves well with respect to the product rule, the quotient rule, composition of functions and the chain rule. The local $M$-derivative of a constant is zero, differently from the case of the Riemann-Liouville fractional derivative.  Moreover, we present $\alpha$-differentiable functions versions of Rolle's theorem, the mean value theorem and the extended mean value theorem.

An $M$-integral was introduced and some results bearing relations to results in the calculus of integer order were obtained, among which the $M$-fractional versions of the inverse theorem, the fundamental theorem of calculus and a theorem involving integration by parts.

We obtained a relation between our local $M$-derivative and the alternative fractional derivative, presented in section 5 of the paper, as well as possible applications in several areas, particularly as we show, in the solution of a linear differential equation. We conclude from this result that the definition presented here can be considered a generalization of the so-called alternative fractional derivative \cite{UNK}.

Possible applications of the local $M$-derivative and the corresponding $M$-integral are the subject of a forthcoming paper \cite{JVC}.  

\section*{Acknowledgment} We are grateful to Dr. J. Em\'{\i}lio Maiorino 
for several and fruitful discussions.

\bibliography{ref}
\bibliographystyle{plain}

\end{document}